
\documentclass[12pt]{amsart}
\usepackage{times, amscd}

\setlength{\textheight}{20cm} \textwidth16cm \hoffset=-2truecm
\begin{document}
\numberwithin{equation}{section}

\def\1#1{\overline{#1}}
\def\2#1{\widetilde{#1}}
\def\3#1{\widehat{#1}}
\def\4#1{\mathbb{#1}}
\def\5#1{\frak{#1}}
\def\6#1{{\mathcal{#1}}}

\newcommand{\de}{\partial}
\newcommand{\R}{\mathbb R}
\newcommand{\al}{\alpha}
\newcommand{\tr}{\widetilde{\rho}}
\newcommand{\tv}{\widetilde{\varphi}}
\newcommand{\tO}{\widetilde{\Omega}}
\newcommand{\hv}{\hat{\varphi}}
\newcommand{\tu}{\tilde{u}}
\newcommand{\tx}{\tilde{x}}
\newcommand{\ty}{\tilde{y}}
\newcommand{\tz}{\tilde{z}}
\newcommand{\usc}{{\sf usc}}
\newcommand{\tF}{\tilde{F}}
\newcommand{\tM}{\widetilde{M}}
\newcommand{\debar}{\overline{\de}}
\newcommand{\Z}{\mathbb Z}
\newcommand{\C}{\mathbb C}
\newcommand{\Po}{\mathbb P}
\newcommand{\zbar}{\overline{z}}
\newcommand{\G}{\mathcal{G}}
\newcommand{\So}{\mathcal{U}}
\newcommand{\Ko}{\mathcal{K}}
\newcommand{\U}{\mathcal{U}}
\newcommand{\B}{\mathbb B}
\newcommand{\oB}{\overline{\mathbb B}}
\newcommand{\Cur}{\mathcal D}
\newcommand{\Dis}{\mathcal Dis}
\newcommand{\Levi}{\mathcal L}
\newcommand{\SP}{\mathcal SP}
\newcommand{\Sp}{\mathcal Q}
\newcommand{\Ma}{\mathcal M}
\newcommand{\mF}{\mathcal F}
\newcommand{\mE}{\mathcal E}
\newcommand{\mR}{\mathcal R}
\newcommand{\mN}{\mathcal N}
\newcommand{\mI}{\mathcal I}
\newcommand{\tU}{\tilde{U}}
\newcommand{\mNtF}{\mathcal N_{\tmF}}
\newcommand{\tmF}{\widetilde{\mathcal F}}
\newcommand{\Co}{\mathcal C}
\newcommand{\Hol}{{\sf Hol}(\mathbb H, \mathbb C)}
\newcommand{\Aut}{{\sf Aut}(\mathbb D)}
\newcommand{\D}{\mathbb D}
\newcommand{\oD}{\overline{\mathbb D}}
\newcommand{\Ol}{\mathcal{O}}
\newcommand{\OM}{\mathcal{\widetilde{O}}_{\widetilde{M}}}
\newcommand{\loc}{L^1_{\rm{loc}}}
\newcommand{\loci}{L^\infty_{\rm{loc}}}
\newcommand{\la}{\langle}
\newcommand{\ra}{\rangle}
\newcommand{\thh}{\tilde{h}}
\newcommand{\N}{\mathbb N}
\newcommand{\kd}{\kappa_D}
\newcommand{\Hr}{\mathbb H}
\newcommand{\ps}{{\sf Psh}}
\newcommand{\tg}{\widetilde{\gamma}}

\newcommand{\subh}{{\sf subh}}
\newcommand{\harm}{{\sf harm}}
\newcommand{\ph}{{\sf Ph}}
\newcommand{\tl}{\tilde{\lambda}}
\newcommand{\ts}{\tilde{\sigma}}

\def\va{\varphi}
\def\Re{{\sf Re}\,}
\def\Im{{\sf Im}\,}

\def\dist{{\rm dist}}
\def\const{{\rm const}}
\def\rk{{\rm rank\,}}
\def\id{{\sf id}}
\def\aut{{\sf aut}}
\def\Aut{{\sf Aut}}
\def\CR{{\rm CR}}
\def\GL{{\sf GL}}
\def\U{{\sf U}}

\def\la{\langle}
\def\ra{\rangle}

\newtheorem{theorem}{Theorem}[section]
\newtheorem{lemma}[theorem]{Lemma}
\newtheorem{proposition}[theorem]{Proposition}
\newtheorem{corollary}[theorem]{Corollary}

\theoremstyle{definition}
\newtheorem{definition}[theorem]{Definition}
\newtheorem{example}[theorem]{Example}

\theoremstyle{remark}
\newtheorem{remark}[theorem]{Remark}
\numberwithin{equation}{section}

\title[Perturbation of Baum-Bott residues]{Perturbation of Baum-Bott residues}
\author[F. Bracci]{Filippo Bracci}
\author[T. Suwa]{Tatsuo Suwa}
\address{F. Bracci: Dipartimento Di Matematica\\
Universit\`{a} di Roma \textquotedblleft Tor Vergata\textquotedblright\ \\
Via Della Ricerca Scientifica 1, 00133 \\
Roma, Italy} \email{fbracci@mat.uniroma2.it}
\address{T. Suwa: Department of Mathematics\\
Hokkaido University\\
Sapporo 060-0810\\
Japan} \email{tsuwa@sci.hokudai.ac.jp}


\keywords{Holomorphic foliations; Baum-Bott residues;
deformations}

\thanks{Research partially supported by  a project PRIN2007 and a grant of JSPS}

\begin{abstract}
We prove that Baum-Bott residues vary continuously under smooth
deformations of holomorphic foliations. This provides an
effective way to compute residues.
\end{abstract}

\maketitle

\section{Introduction}

A holomorphic foliation $\mF$ on a complex manifold $M$ is
known to produce a ``holomorphic action'', as discovered by P.
Baum and R. Bott in \cite{BB2}, on the virtual bundle $TM/\mF$.
Such a partial holomorphic action provides a holomorphic
connection for the bundle $TM/\mF$ along $\mF$ outside the
singularities of $\mF$ and thus produces localization of
sufficiently high degree classes of $TM/\mF$ around the
singularities of $\mF$. Such localizations are called
``Baum-Bott residues'' (see \cite[Thm. 2]{BB2}, \cite[Ch.VI,
Thm. 3.7]{Su2}). When the singularity is isolated the Baum-Bott
residue can be expressed in terms of a Grothendieck residue
(see \cite[(0.6)]{BB2}). When the singular set is non-isolated
in some cases some formulas are available (see \cite[Thm.
3]{BB2} and \cite{D}) but, in general, explicit computation of
the residues is rather difficult.

The aim of the present paper is to study the behavior of the
Baum-Bott residues under smooth deformations. This provides an
effective tool for computing residues  explicitly.

More in details, we consider a smooth deformation of a complex
manifold. This is essentially a smooth fibration over a smooth
manifold, whose fibers are complex manifolds (see Section
\ref{def}). On each such a fiber we consider a holomorphic
foliation which varies smoothly  (see Section \ref{def-fol}).
We prove that the Baum-Bott residues (when taken together
suitably) vary continuously under smooth deformations.

We state here a simple consequence of our main Theorem
\ref{main} for the case of classes of top degree, referring the
reader to Section \ref{Bottres} for the general case. Thus, let
$P$ be a real manifold, the ``parameter space''. Let
$\tM:=\{M_t\}_{t\in P}$, be a deformation of complex manifolds
of dimension $n$. Let $\tmF:=\{\mF_t\}$ be a deformation of
holomorphic foliations on $M_t$. Then $\tmF$ defines naturally
a smooth foliation on $\tM$ (see Section \ref{def-fol}).

Suppose the singular set $S_{t_0}$ of $\mF_{t_0}$ in $M_{t_0}$
is compact and connected. The analytic set $S_{t_0}$ is
contained in a connected component in $\tM$ of the singular set
of the smooth foliation $\tmF$, and we denote by $S_t$ the
intersection of such component with $M_t$. The set $S_t$ is
contained in the singular set of $\mF_t$ but in general may not
be connected. Thus, we let $S_t=\cup S_t^\lambda$ be the
connected components decomposition of $S_t$.

\begin{theorem}\label{main-intro}
Suppose that $S_t$ is compact for all $t\in P$. Let $\varphi$
be a homogeneous symmetric polynomial of degree $n$ and denote
by $\hbox{BB}_\va(\mF_t;S^\lambda_t)$ the Baum-Bott residue of
$\mF_t$ at $S^\lambda_t$. Then
\[
\lim_{t\to t_0}\sum_\lambda \hbox{BB}_\va(\mF_t;S^\lambda_t)
=\hbox{BB}_\va(\mF_{t_0};S_{t_0}).
\]
\end{theorem}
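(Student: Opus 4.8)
The plan is to realize the Baum–Bott residue as the integral over a small closed oriented boundary of a suitable Bott-type form, and then to show that this integral varies continuously in the parameter $t$ by a compactness-plus-uniformity argument. Concretely, fix a tubular neighborhood $\tU$ of the component of $\mathrm{Sing}(\tmF)$ containing $S_{t_0}$ inside the total space $\tM$, small enough that its closure meets $\mathrm{Sing}(\tmF)$ only in that component; shrinking $P$ around $t_0$ if necessary, the hypothesis that $S_t$ is compact for all $t$ guarantees that $\tU\cap M_t$ is, for each $t$, a neighborhood of $S_t$ whose boundary $\de(\tU\cap M_t)$ is disjoint from $\mathrm{Sing}(\mF_t)$. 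On $M_t\setminus S_t$ the Baum–Bott action furnishes a holomorphic connection for $TM_t/\mF_t$ along the leaves, and combining it with a $C^\infty$ connection (via a Čech–de Rham / Bott difference construction as in \cite[Ch. VI]{Su2}) produces a closed $(2n-1)$-form $\Phi_t$ representing $\varphi$ of the virtual bundle on $M_t\setminus S_t$; then $\sum_\lambda \mathrm{BB}_\va(\mF_t;S_t^\lambda)=\int_{\de(\tU\cap M_t)}\Phi_t$ by Stokes, the sum over $\lambda$ arising exactly because $\de(\tU\cap M_t)$ encircles all the components $S_t^\lambda$ at once.

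The second step is to assemble these fiberwise forms into one global object on the total space. Because $\tmF$ is a genuine smooth foliation on $\tM$ (Section \ref{def-fol}) whose restriction to each $M_t$ is $\mF_t$, both the holomorphic-along-leaves connection and the auxiliary $C^\infty$ connection can be chosen to depend smoothly on $t$ — indeed they can be taken as the restrictions to the fibers of connections defined on the whole of $\tM\setminus\mathrm{Sing}(\tmF)$. This yields a single $C^\infty$ form $\2\Phi$ on $\tM\setminus\mathrm{Sing}(\tmF)$ with $\2\Phi|_{M_t\setminus S_t}=\Phi_t$ up to an exact fiberwise term that does not affect the boundary integral. The boundaries $\de(\tU\cap M_t)$ sweep out, as $t$ ranges over a neighborhood of $t_0$, a smooth family of compact $(2n-1)$-cycles sitting inside the fixed compact set $\overline{\tU}\setminus\mathrm{int}(\tU')$ (for a slightly smaller tube $\tU'\supset \bigcup_t S_t$), on which $\2\Phi$ is smooth and hence bounded with all derivatives.

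The final step is the limit itself: $t\mapsto \int_{\de(\tU\cap M_t)}\2\Phi$ is continuous because it is the integral of a fixed smooth form over a smoothly varying family of compact cycles — one can pull everything back to a reference fiber $\de(\tU\cap M_{t_0})$ by the diffeomorphisms coming from the smooth fibration structure of $\tM$, turning the statement into continuity under the integral sign of a $t$-parametrized family of smooth top-forms on a fixed compact manifold, which follows from dominated convergence (or simply uniform convergence). Evaluating at $t_0$ gives $\int_{\de(\tU\cap M_{t_0})}\2\Phi=\int_{\de(\tU\cap M_{t_0})}\Phi_{t_0}=\mathrm{BB}_\va(\mF_{t_0};S_{t_0})$, where the last equality uses that $S_{t_0}$ is connected so the single boundary integral is precisely its residue; this yields the claimed formula. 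I expect the main obstacle to be Step 1–2, namely verifying carefully that the Bott difference form can be constructed to vary smoothly across fibers \emph{simultaneously} with the control on the singular locus — i.e. that the neighborhoods $\tU\cap M_t$ genuinely exhaust $S_t$ and have singularity-free boundary for all $t$ near $t_0$, which is where the compactness hypothesis on $S_t$ is essential and where a non-compact or ``escaping'' singular locus would break the argument; once that is in place, the passage to the limit is soft.
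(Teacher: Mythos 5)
Your proposal follows essentially the same route as the paper: extend the Baum--Bott partial connection to a connection for $\mNtF$ over $\tM\setminus S(\tmF)$, verify that the fiberwise restrictions of its characteristic forms vanish (the relative Bott vanishing), and deduce continuity from the fact that the residue is then computed by integrating fixed smooth forms on $\tM$ over a smoothly varying family of compact regions. The one imprecision is in your Step 1: the Bott difference form is not closed in general (one has $d\va(\nabla_0,\nabla_1)=\va(\nabla_1)$ on the overlap, and near a non-isolated $S_{t_0}$ one cannot arrange $\va(\nabla_1)=0$), so the residue is the \v{C}ech--de Rham pairing $\int_{R_t}\va(\nabla_1^\bullet)\wedge\tau-\int_{\de R_t}\va(\nabla_0^\bullet,\nabla_1^\bullet)\wedge\tau$ rather than a single boundary integral of a closed form --- but since both terms are again integrals of fixed smooth forms on $\tM$ over smoothly varying compact sets, your continuity argument applies to them verbatim.
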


A general version of the previous theorem is Theorem
\ref{main}, whose proof is contained in Sections \ref{Bottvan}
and \ref{Bottres}. The rough idea of the proof is to define a
special connection on the regular part of the bundle
$T\tM/\tmF$ such that on each fiber $M_t$ defines the special
connection given by the Baum-Bott action and see the residues
as the integral of a smooth form on $\tM$ along the fibers.

In Section \ref{examples} we give an explicit example of the
previous result. In particular, aside from explicit
computation, the example shows that if the residues in the same
connected component of $\tM$ are not taken together, continuity
is lost.

\medbreak

Part of this work was done while the first named author was
visiting the University of Tokyo. We would like to thank Prof.
J. Noguchi for providing us inspiring environment for research.

\section{Deformation of manifolds}\label{def}

The theory of deformation of complex structures was first
systematically developed by K. Kodaira and D. C. Spencer
\cite{KS}, here we recall the basic material relevant for our
needs.

\begin{definition}
A {\sl  deformation of manifolds} is a triple $(\tM, P, \pi)$,
where $P$ is a $C^\infty$ manifold of real dimension $s$,
called the {\sl parameter space},  $\tM$ is  a $C^\infty$
manifold of real dimension $2n+s$, called the {\sl ambient
manifold}, and $\pi: \tM\to P$ is a surjective $C^\infty$ map
such that there exists a covering $\{U_\al\}$ (called an {\sl
adapted deformation coordinates covering} of $\tM$) with the
following properties:
\begin{enumerate}
\item for each $\al$, the open set $U_\al$ is   diffeomorphic
to $D\times V$, where $D$ is an open set of $\C^n$ and $V$
is an open set of $\R^s$, with coordinates
$(z^\al_1,\ldots, z^\al_n, t_1^\al,\ldots, t_s^\al)$,
\item $\pi(U_\al)$ is diffeomorphic to $V$ and $\pi$ is
compatible with the projection $D\times V\to V$,
\item on $U_\al\cap U_\beta\neq \emptyset$ we may express as
\begin{equation}\label{cambio}
\begin{cases}
z^\beta_i=z^\beta_j(z^\al, t^\al)\quad i=1,\ldots, n\\
t^\beta_j=t^\beta_j(t^\al)\quad j=1,\ldots, s
\end{cases}
\end{equation}
and, for each fixed $t^\al$, the map $z^\al\mapsto
z^\beta(z^\al, t^\al)$ is holomorphic.
\end{enumerate}
\end{definition}

For $t\in P$ we let $M_t:=\pi^{-1}(t)$ be the fiber over $t$.
By definition the fibers $M_t$, for $t\in P$, are complex
manifolds. In particular we can define the sheaf $\OM$ of
$C^\infty$ functions on $\tM$ such that $f\in \OM(U)$ if for
all $x\in U$, $f|_{\pi^{-1}(\pi(x))}  \in
\Ol_{\pi^{-1}(\pi(x))}(U\cap {\pi^{-1}(\pi(x))})$.

\begin{remark}
Let $U_\al\subset \tM$ be a coordinate chart of an adapted
coordinate covering for $\tM$. A function $f$ belongs to
$\OM(U_\al)$ if and only if $f(z_\al, t_\al)$ is a $C^\infty$
function such that $f(\cdot, t_\al)$ is holomorphic (note that
this
 is well defined by \eqref{cambio}).
\end{remark}

\begin{definition}
Let $E$ be a $C^\infty$ complex vector bundle of rank $r$ over
$\tM$. We say that $E$ is an {\sl $\OM$-(vector) bundle} if
there exists a trivializing atlas $\{U_\al\}$ for $E$, with
frames $\{e_1^\al, \ldots, e_r^\al\}$ for $E|_{U_\al}$,  such
that the transition matrices with respect to those frames have
entries which are  local sections of $\OM$. Such frames
$\{e_1^\al, \ldots, e_r^\al\}$ are called {\sl $\OM$-frames}.
\end{definition}

Given an $\OM$-bundle $E$, we denote by $\OM(E)$ the
$\OM$-module of $\OM$ sections of $E$. Namely, $s\in \OM(E)(U)$
is a $C^\infty$ section of $E$ over the open set $U\subset \tM$
such that in any $\OM$-frame $\{e_1^\al, \ldots, e_r^\al\}$
over $U_\al$ with $U_\al\cap U\neq \emptyset$ the section $s$
is given by
\[
s(z^\al, t^\al)=\sum_{j=1}^r f^\al_j(z^\al, t^\al) e_j^\al, \quad f^\al_j \in \OM(U_\al\cap U).
\]

Let $T_{\R}\pi:=\ker \pi_\ast$. Since the fibers of the
fibration $\pi: \tM \to P$ are holomorphic, we can define the
complex vector bundles
\[
T\pi:=\bigcup_{x\in \tM} T_x \pi^{-1}(\pi(x)), \quad \overline{T\pi}:=\bigcup_{x\in \tM} \overline{T_x \pi^{-1}(\pi(x))}.
\]
Local frames for $T\pi$ and $\overline{T\pi}$ in an adapted
deformation coordinates covering are given respectively by
$\{\frac{\de}{\de z^\al_j}\}_{j=1,\ldots, n}$ and
$\{\frac{\de}{\de \overline{z}^\al_j}\}_{j=1,\ldots, n}$ and
\[
T_{\R}\pi\otimes \C=T\pi \oplus \overline{T\pi}.
\]
Using an adapted deformation coordinates covering, by
\eqref{cambio}, it is easy to see that $T\pi$ is an $\OM$-vector
bundle over $\tM$. Moreover, it has a natural structure of {\sl
$\OM$-Lie algebra}, namely, using local coordinates, one can
easily see that if $v, w \in \OM(T\pi)(U)$ then
\[
[v,w]\in \OM(T\pi)(U).
\]

\section{Deformation of foliations}\label{def-fol}

Deformations of holomorphic foliations, especially from the
point of view of moduli spaces, have been studied by a number
of authors ({\sl e.g.} \cite{G}, \cite{P}, \cite{R}). Here we
consider  $C^\infty$ families of singular holomorphic
foliations.

Let ${\mathcal S}$ be an $\OM$-module. We say that ${\mathcal
S}$ is {\sl coherent}   if, for each point $x\in \tM$ there
exists an open neighborhood $U\subset \tM$ and two integers
$p,q \geq 0$ such that
\begin{equation}\label{coer-S}
\OM|_U^p\stackrel{\va}{\longrightarrow} \OM|_U^q\longrightarrow {\mathcal S}|_U\to 0,
\end{equation}
is an exact sequence of $\OM|_U$-modules.

\begin{definition}
Let $(\tM, P, \pi)$ be a deformation of manifolds . A coherent
$\OM$-submodule $\tmF$ of $\OM(T\pi)$ such that $[\tmF,
\tmF]\subset \tmF$ is called a {\sl deformation of foliations}.
\end{definition}

Given a deformation of foliations $\tmF$ on a deformation of
manifolds $(\tM, P, \pi)$, we denote by ${\mathcal C}^\infty_P$
the sheaf of germs of complex valued smooth functions on $P$,
and for each $t\in P$, by $\mI_t:=\{f\in {\mathcal
C}^\infty_{P} : f(t)=0\}$ the ideal sheaf of smooth functions
vanishing at $t$. The set $\mR:=\pi^\ast{\mathcal
C}^\infty_{P}$  is the sheaf of smooth functions on $\tM$ that
are constant along the fibers, and it is naturally a subsheaf
of $\OM$. Noting that $\mR/\pi^*\mI_t$ is supported on
$M_t=\pi^{-1}(t)$, we define
\[
\mF_{t}:=\tmF\otimes_{\mR} \mR/\pi^\ast\mI_t.
\]
Note that $\OM\otimes_{\mR} \mR/\pi^\ast\mI_t=\Ol_{M_t}$, the sheaf of holomorphic
functions on $M_t$. Hence, if $\mathcal E$ is an $\OM$-module
over $\tM$, then $\mathcal E\otimes_{\mR} \mR/\pi^\ast\mI_t$ is an $\Ol_{M_t}$-module over $M_t$.

In particular, the sheaf $\mF_t$ is an $\Ol_{M_t}$-module. In
adapted deformation coordinates, if $X_1,\ldots, X_r$ are local
generators of $\tmF$, given by
\[
X_j(z^\al, t^\al)=\sum f_{ij}(z^\al, t^\al)\frac{\de}{\de z^\al_i},
\]
then $\mF_{t_0}$ is locally generated by the $X_j(z^\al,
t_0^\al)$'s. Namely it is generated by the vector fields
\begin{equation}\label{gen-F}
X_j(z^\al, t_0^\al)=\sum f_{ij}(z^\al,t_0^\al)\frac{\de}{\de z^\al_i}
\end{equation}
obtained by evaluating $f_{ij}(z^\al,t^\al)$ at $t=t_0$. From
this remark, it follows easily:

\begin{lemma}
For all $t\in P$, the sheaf $\mF_t$ defines a holomorphic
foliation on $M_t$.
\end{lemma}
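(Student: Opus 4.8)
The plan is to verify the two defining properties of a (singular) holomorphic foliation on $M_t$ for the sheaf $\mF_t$: that it is a coherent $\Ol_{M_t}$-submodule of $\Ol_{M_t}(TM_t)$, and that it is closed under the Lie bracket. The first observation, already recorded in the excerpt, is that $\mF_t = \tmF \otimes_\mR \mR/\pi^*\mI_t$ is an $\Ol_{M_t}$-module, since $\OM \otimes_\mR \mR/\pi^*\mI_t = \Ol_{M_t}$ and $\tmF$ is an $\OM$-module. So the real content is (i) coherence and (ii) the bracket condition, plus checking that $\mF_t$ genuinely sits inside $\Ol_{M_t}(TM_t)$ rather than mapping there with nontrivial kernel or failing to be a subsheaf.

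First I would establish coherence. Starting from a local presentation \eqref{coer-S} of $\tmF$ as an $\OM$-module, $\OM|_U^p \stackrel{\va}{\to} \OM|_U^q \to \tmF|_U \to 0$, I would apply the right-exact functor $-\otimes_\mR \mR/\pi^*\mI_t$. Because $\OM \otimes_\mR \mR/\pi^*\mI_t = \Ol_{M_t}$, this yields an exact sequence $\Ol_{M_t}|_{U\cap M_t}^p \to \Ol_{M_t}|_{U\cap M_t}^q \to \mF_t|_{U\cap M_t} \to 0$, exhibiting $\mF_t$ as a coherent $\Ol_{M_t}$-module near every point of $M_t$. Concretely, in adapted deformation coordinates this is exactly the passage recorded in \eqref{gen-F}: if $X_1,\dots,X_r$ generate $\tmF$ locally then the restrictions $X_j(z^\al,t_0^\al) = \sum_i f_{ij}(z^\al,t_0^\al)\,\de/\de z^\al_i$ generate $\mF_{t_0}$, and these are genuine holomorphic vector fields on $M_{t_0}$ because each $f_{ij}(\cdot, t_0^\al)$ is holomorphic (the $f_{ij}$ being sections of $\OM$). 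This also shows $\mF_t \subset \Ol_{M_t}(TM_t) = \Ol_{M_t}(T\pi|_{M_t})$, using that $T\pi$ restricted to $M_t$ is the holomorphic tangent bundle $TM_t$.

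Next I would check the bracket condition $[\mF_t, \mF_t] \subset \mF_t$. This follows from $[\tmF, \tmF] \subset \tmF$ by specialization: given two holomorphic vector fields on $M_t$ lying in $\mF_t$, lift them (locally) to $\OM(T\pi)$-sections $v, w$ in $\tmF$ — possible since the $X_j(z^\al, t_0^\al)$ come from $X_j \in \tmF$ — form $[v,w] \in \tmF$, and observe that its restriction to $M_t$ is the Lie bracket on $M_t$ of the two given fields, because in adapted coordinates the bracket only involves the $\de/\de z^\al_i$ directions and evaluation at $t = t_0$ commutes with the coefficient differentiations in the $z$-variables. Hence $[\cdot,\cdot]$ of the two given fields lies in the image of $\tmF$, i.e. in $\mF_t$.

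The only genuinely delicate point is making the tensor-product manipulations honest at the level of sheaves rather than stalks: one must make sure that $-\otimes_\mR \mR/\pi^*\mI_t$ applied to the presentation really computes $\mF_t$ locally on $M_t$ and that no hidden $\mathrm{Tor}$ terms obstruct exactness on the left in a way that would matter — but since we only need right-exactness (surjectivity onto $\mF_t$ with a finite free presentation), this is automatic, and the main obstacle is thus merely bookkeeping: carefully identifying $T\pi \otimes_\mR \mR/\pi^*\mI_t$ with $TM_t$ and checking that the $\Ol_{M_t}$-module structure obtained from the tensor product agrees with the one coming from restriction of vector fields. Once these identifications are in place, coherence and the bracket closure are immediate, and $\mF_t$ is by definition a singular holomorphic foliation on $M_t$.
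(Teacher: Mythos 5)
Your proof is correct and follows essentially the same route the paper intends: the paper's "proof" is just the preceding remark that $\mF_{t_0}$ is locally generated by the evaluated vector fields \eqref{gen-F}, from which coherence (via right-exactness of $-\otimes_{\mR}\mR/\pi^\ast\mI_t$ applied to a presentation) and involutivity (via evaluation at $t=t_0$ commuting with the bracket) "follow easily." You simply fill in those details, including the correct caveat that the map $\mF_t\to\Ol_{M_t}(TM_t)$ need not be injective a priori (the point addressed later in Lemma \ref{lemmaB}), so one works with the image sheaf.
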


We have the following exact sequence of $\OM$-modules on $\tM$:
\begin{equation}\label{ex-OM}
0\longrightarrow \tmF \longrightarrow \OM(T\pi)\longrightarrow \mNtF \longrightarrow 0.
\end{equation}
The {\sl singular set} of $\tmF$ is by definition
\[
S(\tmF):=\{x \in \tM: {\mNtF}_{,x} \hbox{\ is not\ } {\mathcal O}_{\tM,x}-\hbox{free}\}.
\]

\begin{lemma}\label{lemmaA}
For each point $x\in \tM$ there exists an open neighborhood
$U\subset \tM$ and two integers $p,q \geq 0$ such that
\begin{equation}\label{coer-NF}
\OM|_U^p\stackrel{\va}{\longrightarrow} \OM|_U^q\longrightarrow \mNtF|_U\to 0,
\end{equation}
is an exact sequence of $\OM|_U$-modules. Moreover,
\[
S(\tmF)|_U=\{x\in U : \rk \va_x \hbox{\ is not maximal}\}.
\]
\end{lemma}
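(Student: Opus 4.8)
The plan is to prove Lemma \ref{lemmaA} by reducing the statement about $\mNtF$ to a presentation of the $\OM$-module $\tmF$ itself and then tensoring the exact sequences together, keeping careful track of coherence. First I would observe that $T\pi$ is locally free as an $\OM$-module (it has the $\OM$-frames $\{\partial/\partial z^\al_j\}$ from Section \ref{def}), so over a small coordinate neighborhood $U_\al$ we may identify $\OM(T\pi)|_{U_\al}$ with $\OM|_{U_\al}^n$. Since $\tmF$ is by definition a coherent $\OM$-submodule, shrinking $U$ we get a presentation
\[
\OM|_U^p\stackrel{\psi}{\longrightarrow}\OM|_U^q\stackrel{\sigma}{\longrightarrow}\tmF|_U\longrightarrow 0,
\]
where the composite $\iota\circ\sigma\colon\OM|_U^q\to\OM|_U^n$ (with $\iota$ the inclusion $\tmF\hookrightarrow\OM(T\pi)$) is given by a matrix whose columns are the components of the generating vector fields $X_j$; call this map $\va$ in the notation of \eqref{coer-NF}, after composing with the identification $\OM(T\pi)|_U\cong\OM|_U^n$. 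The quotient $\OM|_U^n/\operatorname{image}(\va)$ is then exactly $\mNtF|_U$ by \eqref{ex-OM}, which gives the desired exact sequence \eqref{coer-NF} with, say, $q$ replaced by $n$ and $p$ replaced by the original $q$; relabeling the integers gives the statement as written.

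For the second assertion I would argue pointwise at $x\in U$. The stalk $\mNtF_{,x}=\operatorname{coker}\va_x$ is generated by $q$ (in the relabeled count, $n$) elements, and it is $\Ol_{\tM,x}$-free precisely when it is free of the generic rank $n-\rk\va_x$. The point is that $\operatorname{coker}\va_x$ is free if and only if the map $\va_x$, viewed over the local ring $\Ol_{\tM,x}$, has a cokernel with no torsion obstruction; by the standard Fitting-ideal / minimal-presentation criterion over the Noetherian local ring $\Ol_{\tM,x}$, $\operatorname{coker}\va_x$ is free if and only if $\va_x$ can be represented, after a change of basis, by a matrix that is block-diagonal with an identity block and a zero block — equivalently, if and only if $\rk\va_x$ equals the maximal (generic) rank $r_0=\max_{y\in U}\rk\va_y$. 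Hence $x\in S(\tmF)$ iff $\rk\va_x<r_0$, i.e. iff $\rk\va_x$ is not maximal. I would spell this out using that the entries of $\va$ are holomorphic in the $z^\al$-variables (sections of $\OM$), so that the usual linear-algebra-over-a-local-ring arguments apply fiberwise, and that $\rk\va_y$ is lower semicontinuous so the maximal value is attained on a dense open set.

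The main obstacle I expect is the freeness criterion in the last step: one must be careful that ``$\mNtF_{,x}$ is $\Ol_{\tM,x}$-free'' is genuinely equivalent to the numerical condition ``$\rk\va_x$ maximal,'' and not merely implied by it. The subtle direction is that if $\operatorname{coker}\va_x$ is free then the rank is maximal; this needs the fact that over a local ring a finitely presented module is free iff it is projective iff (here, over the regular local ring $\Ol_{\tM,x}$) its minimal presentation matrix vanishes, and a small argument — via Nakayama's lemma applied to the presentation, or via the behavior of Fitting ideals under the quotient $\OM\to\Ol_{M_t}$ — to upgrade ``free at $x$'' to ``constant maximal rank in a neighborhood.'' I would handle this by noting that if $\mNtF_{,x}$ is free of rank $n-r_x$ then choosing $n-r_x$ generators gives a surjection $\OM|_U^{n-r_x}\to\mNtF|_U$ near $x$ (by coherence) which is an isomorphism on the stalk at $x$, hence an isomorphism in a neighborhood, forcing $\rk\va_y\le r_x$ there; combined with lower semicontinuity of the rank this pins $\rk\va$ to the locally constant maximal value near $x$, completing the equivalence.
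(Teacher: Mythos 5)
Your proposal is correct and follows essentially the same route as the paper, whose proof simply observes that $\mNtF$ inherits a finite presentation from the coherence of $\tmF$ together with the local freeness of $\OM(T\pi)$ via \eqref{ex-OM}, and then invokes ``standard commutative algebra'' for exactly the rank criterion you spell out. The one caveat is that the local ring $\mathcal{O}_{\tM,x}$ is not Noetherian or regular (the dependence on $t$ is only $C^\infty$), but the tools you actually use --- Nakayama, vanishing of the higher minors on a neighborhood, and the finite generation of the kernel of a surjection onto a finitely presented module --- do not require Noetherianity, so the argument survives.
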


\begin{proof}
Since $\tmF$  is $\OM$-coherent and $\OM(T\pi)$ is $\OM$-locally free,  from
(\ref{ex-OM}) it follows that $\mNtF$ is $\OM$-coherent as well,
so that \eqref{coer-NF} holds. The final statement follows from
standard commutative algebra and \eqref{coer-NF}.
\end{proof}

\begin{lemma}\label{lemmaB}
For each  $t\in P$ such that $M_t\not\subset S(\tmF)$ the
following sequence of $\Ol_{M_t}$-modules over $M_t$ is exact:
\begin{equation}\label{long-Ex}
0\to \tmF\otimes_{\mR} \mR/\pi^\ast \mI_t
\to \OM(T\pi)\otimes_{\mR} \mR/\pi^\ast \mI_t\to
\mNtF\otimes_{\mR} \mR/\pi^\ast \mI_t \to 0.
\end{equation}
\end{lemma}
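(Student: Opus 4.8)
The plan is to reduce the statement to a local, purely algebraic computation about right-exactness of tensor products, together with one extra observation that recovers left-exactness from the hypothesis $M_t \not\subset S(\tmF)$. We always have right-exactness of $-\otimes_{\mR}\mR/\pi^\ast\mI_t$ applied to the short exact sequence \eqref{ex-OM}: tensoring is right exact, so the tail
\[
\tmF\otimes_{\mR}\mR/\pi^\ast\mI_t \longrightarrow \OM(T\pi)\otimes_{\mR}\mR/\pi^\ast\mI_t \longrightarrow \mNtF\otimes_{\mR}\mR/\pi^\ast\mI_t \longrightarrow 0
\]
is automatically exact. Here I would remark that, although $\mR/\pi^\ast\mI_t$ is only an $\mR$-module, all three sheaves in \eqref{ex-OM} are $\OM$-modules and $\mR\subset\OM$, so tensoring over $\mR$ is the natural operation and the resulting sheaves are supported on $M_t$ and carry $\Ol_{M_t}=\OM\otimes_{\mR}\mR/\pi^\ast\mI_t$-module structures, exactly as noted after the definition of $\mF_t$. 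Thus the only thing left to prove is that the first arrow in \eqref{long-Ex} is injective, i.e.\ that $\mathrm{Tor}_1^{\mR}(\mNtF,\mR/\pi^\ast\mI_t)=0$, or, equivalently by the exact sequence, that the image of $\tmF$ in $\OM(T\pi)$ stays "saturated" after evaluation at $t$.

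To get the injectivity I would work locally and use Lemma \ref{lemmaA}. Fix $x\in M_t$ and a neighborhood $U$ on which \eqref{ex-OM} is presented by a matrix: choosing $\OM$-frames, $\OM(T\pi)|_U\cong\OM|_U^n$ and $\tmF|_U$ is the image of some $\OM$-linear map $\OM|_U^m \xrightarrow{A}\OM|_U^n$, with $\mNtF|_U=\mathrm{coker}\,A$. Evaluating the entries of $A$ at $t=t_0$ (in the sense of \eqref{gen-F}) gives a matrix $A_{t_0}$ with entries in $\Ol_{M_t}$, and the tensored sequence \eqref{long-Ex} is exactly
\[
\Ol_{M_t}|_{U\cap M_t}^m \xrightarrow{A_{t_0}} \Ol_{M_t}|_{U\cap M_t}^n \longrightarrow \mNtF\otimes_{\mR}\mR/\pi^\ast\mI_t|_{U\cap M_t}\to 0,
\]
with the image of $A_{t_0}$ being $\mF_t$. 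The first arrow of \eqref{long-Ex} therefore reads: the submodule generated by the columns of $A_{t_0}$ inside $\Ol_{M_t}^n$ equals $\tmF\otimes_{\mR}\mR/\pi^\ast\mI_t$. But this is true essentially by definition of $\mF_t=\tmF\otimes_{\mR}\mR/\pi^\ast\mI_t$ — the content is that the natural surjection $\tmF\otimes_{\mR}\mR/\pi^\ast\mI_t\twoheadrightarrow \mathrm{im}(A_{t_0})$ is injective. This is where the hypothesis $M_t\not\subset S(\tmF)$ enters: it guarantees that $\mNtF$ is not annihilated by $\pi^\ast\mI_t$ on a whole component, so $\pi^\ast\mI_t$ contains a non-zero-divisor on $\mNtF$ near generic points of $M_t$, and a standard depth/$\mathrm{Tor}$ argument (regular sequence $\Rightarrow$ vanishing of $\mathrm{Tor}_1$) propagates to all of $M_t$ by coherence. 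Concretely, since $\mR$ is a regular sheaf of rings and $\pi^\ast\mI_t$ is generated near any point by $s$ elements forming (after suitable choice of coordinates in $P$) a regular sequence on $\OM$, and since $\mNtF$ has no $\pi^\ast\mI_t$-torsion supported on all of $M_t$ by the hypothesis, one concludes $\mathrm{Tor}_1^{\mR}(\mNtF,\mR/\pi^\ast\mI_t)=0$.

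I expect the main obstacle to be precisely this last Tor-vanishing: one must be careful that $\mR/\pi^\ast\mI_t$, viewed as a module over the non-Noetherian-looking sheaf $\mR=\pi^\ast\Co^\infty_P$, still behaves like a quotient by a regular sequence. The way around this is to pick, near the chosen point, adapted deformation coordinates so that $\pi^\ast\mI_t$ is generated by the pulled-back coordinate functions $t_1-t_0^1,\dots,t_s-t_0^s$, which do form an $\OM$-regular sequence (they are part of a coordinate system and $\OM$-modules are, fiberwise, coherent analytic sheaves), and then filter $\mR/\pi^\ast\mI_t$ by the corresponding Koszul complex. Tensoring the Koszul complex with $\mNtF$ and using that multiplication by each $t_j-t_0^j$ is injective on $\OM(T\pi)$ and hence, via \eqref{ex-OM} and the snake lemma, that its only possible failure of injectivity on $\mNtF$ lives on $M_t\cap S(\tmF)$ — which by hypothesis is a proper analytic subset of $M_t$, hence does not support the generic torsion — gives the required vanishing after a standard induction on $s$. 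Once injectivity of the first arrow is in hand, exactness of \eqref{long-Ex} follows, and this is all that is needed downstream to define $\mF_t$ and $\mNtF\otimes\mR/\pi^\ast\mI_t=\mN_{\mF_t}$ as the normal sheaf of the induced foliation on $M_t$.
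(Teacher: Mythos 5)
Your overall route is the same as the paper's: right-exactness of $-\otimes_{\mR}\mR/\pi^\ast\mI_t$ is free, so everything reduces to injectivity of the first arrow; that injectivity is automatic at stalks over points $x\notin S(\tmF)$ because $\mNtF_{,x}$ is free there; and the hypothesis $M_t\not\subset S(\tmF)$ is used, via Lemma \ref{lemmaA}, only to make $S(\tmF)\cap M_t$ a thin analytic subset of $M_t$. Your repackaging of the injectivity as a $\mathrm{Tor}_1$/Koszul computation (with $\pi^\ast\mI_t$ generated by the coordinate functions $t_j-t_0^j$, a regular sequence on $\OM$ by Hadamard's lemma) is heavier machinery than the paper's direct stalkwise argument, but for $s=1$ it is literally equivalent to it: the snake lemma identifies $\ker\bigl(\tmF\otimes_{\mR}\mR/\pi^\ast\mI_t\to\OM(T\pi)\otimes_{\mR}\mR/\pi^\ast\mI_t\bigr)$ with the $(t-t_0)$-torsion of $\mNtF$, so no work is saved or created by the reformulation.

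The genuine gap is your final step. Having localized the possible failure of injectivity to the thin set $S(\tmF)\cap M_t$, you conclude that it vanishes because a proper analytic subset ``does not support the generic torsion.'' That is not an argument: a coherent sheaf very commonly has a nonzero torsion subsheaf supported exactly on a proper analytic subset (any quotient $\Ol/\mathcal{J}$ is an example), and the normal sheaf of a singular foliation is precisely the kind of sheaf for which torsion along the singular set is a real possibility. Concretely, for a germ $[\sigma]\in\mNtF_{,x}$ with $x\in S(\tmF)\cap M_t$ and $(t-t_0)\sigma\in\tmF_x$, your argument shows $\sigma_y\in\tmF_y$ for all nearby $y$ outside the thin set, but you never explain why $\sigma$ then lies in $\tmF$ on a full neighborhood of $x$. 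This is exactly the point at which the paper's proof brings in the one ingredient you never use: that $\tmF$ is a subsheaf of the locally free sheaf $\OM(T\pi)$, so that the relation established on the dense complement of the thin set can be propagated across it. Without some such extension/saturation argument (or an explicit verification that the torsion subsheaf in question is zero), the key vanishing $\mathrm{Tor}_1^{\mR}(\mNtF,\mR/\pi^\ast\mI_t)=0$ remains asserted rather than proved.
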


\begin{proof} Since taking tensor products is right exact, it suffices to prove that the second map
from the left is injective.

It  is  true on the stalk over
each $x\in M_t$ such that $x\not\in S(\tmF)$, since $\mN_{{\mathcal F},x}$ is ${\mathcal{\widetilde{O}}_{{\widetilde{M}},x}}$-free.
We  note that according
to Lemma \ref{lemmaA}, $S(\tmF)|_{U\cap M_t}=\{x\in U\cap M_t :
\rk \va_x \hbox{\ is not maximal}\}$. Hence, for $t$ fixed,
these equations give rise to an analytic subset $S(\tmF)\cap
M_t$ of $M_t$, provided $M_t\not\subset S(\tmF)$. As a
consequence, $S(\tmF)\cap M_t$ is thin in $M_t$.
This shows that, since $\tmF$ is a subsheaf of $\OM(T\pi)$, it is also true on the stalk
over $x\in  S(\tmF)\cap M_t$.
\end{proof}

For each $t\in P$ we have the following exact sequence of
$\Ol_{M_t}$-modules:
\begin{equation}\label{exNF}
0 \longrightarrow \mF_t \longrightarrow \Ol_{M_t}(T M_t) \longrightarrow \mN_{\mF_t}\longrightarrow 0.
\end{equation}

\begin{definition}
Let $t\in P$. If $M_t\subset S(\tmF)$, we let
$S(\mF_t):=M_t$. Otherwise we let
\[
S(\mF_t):=\{x \in M_t : \mN_{\mF_t,x} \hbox{\ is not } \Ol_{M_t}-\hbox{free}\}.
\]
\end{definition}

\begin{proposition}\label{singu}
For all $t\in P$ it holds
\[
S(\mF_t)=S(\tmF)\cap M_t.
\]
\end{proposition}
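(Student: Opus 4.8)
The plan is to prove the two inclusions $S(\mF_t)\subset S(\tmF)\cap M_t$ and $S(\tmF)\cap M_t\subset S(\mF_t)$ separately, treating first the ``bad'' case $M_t\subset S(\tmF)$ and then the generic case $M_t\not\subset S(\tmF)$, where the tensor-product machinery from Lemma \ref{lemmaB} applies. First I would dispose of the case $M_t\subset S(\tmF)$: here $S(\tmF)\cap M_t=M_t$ by definition, and $S(\mF_t):=M_t$ also by definition, so the equality is immediate. Thus we may assume $M_t\not\subset S(\tmF)$, so that, by Lemma \ref{lemmaB}, the sequence \eqref{long-Ex} is exact; comparing it with \eqref{exNF} and using the identifications $\tmF\otimes_{\mR}\mR/\pi^\ast\mI_t=\mF_t$, $\OM(T\pi)\otimes_{\mR}\mR/\pi^\ast\mI_t=\Ol_{M_t}(TM_t)$ established in Section \ref{def-fol}, we get a canonical isomorphism of $\Ol_{M_t}$-modules
\[
\mNtF\otimes_{\mR}\mR/\pi^\ast\mI_t\;\cong\;\mN_{\mF_t}.
\]

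**Key steps.** Now fix $x\in M_t$. The statement $S(\mF_t)=S(\tmF)\cap M_t$ near $x$ reduces to the equivalence: $\mN_{\mF_t,x}$ is $\Ol_{M_t,x}$-free $\iff$ $\mNtF_{,x}$ is $\OM_{,x}$-free. I would extract this from the presentations in Lemma \ref{lemmaA}. On a neighborhood $U$ of $x$ we have the presentation \eqref{coer-NF}, $\OM|_U^p\xrightarrow{\va}\OM|_U^q\to\mNtF|_U\to 0$, and by Lemma \ref{lemmaA}, $S(\tmF)|_U=\{y\in U:\rk\va_y\text{ not maximal}\}$. Tensoring the presentation with $\mR/\pi^\ast\mI_t$ over $\mR$ (again right exact) yields a presentation $\Ol_{M_t}|_{U\cap M_t}^p\xrightarrow{\va|_{M_t}}\Ol_{M_t}|_{U\cap M_t}^q\to\mN_{\mF_t}|_{U\cap M_t}\to 0$, where $\va|_{M_t}$ is simply the matrix $\va$ with its entries (which are sections of $\OM$, i.e. $C^\infty$ in $t$, holomorphic in $z$) evaluated at $t$ — this is the same ``evaluate at $t_0$'' operation as in \eqref{gen-F}. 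By the same standard commutative algebra cited in Lemma \ref{lemmaA}, $S(\mF_t)|_{U\cap M_t}=\{y\in U\cap M_t:\rk(\va|_{M_t})_y\text{ not maximal}\}$. Since $(\va|_{M_t})_y$ is literally the matrix $\va_y$ for $y\in M_t$, the rank condition is the same on both sides, giving $S(\mF_t)|_{U\cap M_t}=S(\tmF)|_U\cap M_t$. Covering $M_t$ by such $U$'s finishes the proof.

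**Main obstacle.** The delicate point is justifying that tensoring the $\OM$-presentation \eqref{coer-NF} by $\mR/\pi^\ast\mI_t$ over $\mR$ really produces the presentation of $\mN_{\mF_t}$ by the \emph{restricted} matrix, i.e. that the cokernel is computed correctly and that ``evaluation at $t$'' of the $\OM$-matrix $\va$ agrees with the $\Ol_{M_t}$-matrix appearing in the presentation of $\mN_{\mF_t}=\mNtF\otimes_{\mR}\mR/\pi^\ast\mI_t$. This is exactly the compatibility already used implicitly around \eqref{gen-F} and in Lemma \ref{lemmaB}, and it follows from right-exactness of $-\otimes_{\mR}\mR/\pi^\ast\mI_t$ together with $\OM\otimes_{\mR}\mR/\pi^\ast\mI_t=\Ol_{M_t}$, so there is no real difficulty once one is careful about which ring the tensor product is taken over. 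A secondary point is making sure the ``maximality of rank'' characterization is applied to the right modules on each side — for $\mNtF$ over $\OM$ one wants freeness as an $\OM$-module, and $\OM$ plays, fiberwise in $t$ and holomorphically in $z$, exactly the role $\Ol_{M_t}$ plays on the fiber, so Fitting-ideal/minor arguments transfer verbatim.
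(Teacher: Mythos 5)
Your proof is correct and follows essentially the same route as the paper: dispose of the case $M_t\subset S(\tmF)$ by definition, then use Lemma \ref{lemmaB} and the identification $\OM(T\pi)\otimes_{\mR}\mR/\pi^\ast\mI_t=\Ol_{M_t}(TM_t)$ to obtain $\mN_{\mF_t}=\mNtF\otimes_{\mR}\mR/\pi^\ast\mI_t$. The only difference is that the paper stops there with ``from which the statement follows at once,'' whereas you spell out that step via the tensored presentation and the rank criterion of Lemma \ref{lemmaA}; this is a harmless (indeed welcome) elaboration, since the nontrivial direction --- freeness of the restricted stalk implying freeness of $\mNtF_{,x}$ --- genuinely needs the minor/rank argument you give.
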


\begin{proof}
If $M_t\subset S(\tmF)$ there is nothing to prove.

Thus, assume $M_t\not\subset S(\tmF)$. Since
$\OM(T\pi)\otimes_{\mR} \mR/\pi^\ast\mI_t=\Ol_{M_t}(TM_t)$, comparing
\eqref{long-Ex} and \eqref{exNF} we see that
\begin{equation}\label{ugua}
\mN_{\mF_t}=\mNtF\otimes_{\mR} \mR/\pi^\ast \mI_t,
\end{equation}
from which the statement follows at once.
\end{proof}

\section{Relative Bott vanishing for a deformation of
foliations}\label{Bottvan}

In this section we discuss a  Bott type vanishing theorem for
deformations of foliations. Thus, we let  $(\tM, P, \pi)$ be a
deformation of manifolds  and $\tmF$ a deformation of foliations on
$\tM$. In this section we assume
\[
S(\tmF)=\emptyset
\]
so that there exists an $\OM$-subbundle $\tilde F$ of $T\pi$ such that $\tmF=\OM(\tilde F)$.

We refer to \cite{BB2} for the notion of partial connections (see also \cite{ABST}, \cite{ABT}, \cite{Su2}).
As an example, given an $\OM$-bundle $E$ over $\tM$, we can define a ``relative
$\debar$-connection'' for $E$ along $\overline{T\pi}$ as
follows. We let
\[
\debar_E : C^\infty_{\tM}(E) \to C^\infty_{\tM}(\overline{T^\ast\pi}\otimes E),
\]
imposing that, given an $\OM$-frame
$\{\sigma_1^\al,\ldots,\sigma_r^\al\}$, and a $C^\infty$
section of $E$, $\sigma^\al:=\sum f_j^\al \sigma_j^\al$, it
holds
\[
\debar_E (\sigma^\al)=\sum_{j=1}^r\sum_{k=1}^n \frac{\de f_j^\al}{\de \overline{z}^\al_k}d\overline{z}^\al_k \otimes \sigma^\al_j.
\]
Since the transition matrices for $E$ with respect to
$\OM$-frames contains only entries in $\OM$, it is easy to see
that such a definition is well given and it is a partial
connection for $E$ along $\overline{T\pi}$.

\begin{definition}
Let $E$ be an $\OM$-bundle over $\tM$ and let $\mE$ be the sheaf
of its $\OM$-sections. A {\sl flat partial $\OM$-connection}
for $E$ along $\tmF$ is a $\C$-linear map
\[
\delta: \mE\to\tmF^\ast \otimes \mE
\]
with the properties that for all $X\in \tmF$, $f,g\in \OM$ and
$\sigma\in \mE$
\[
\delta_{(f X)}( g\sigma)=f\left(g\delta_X (\sigma)+ dg(X)\sigma\right)
\]
and
\[
\delta_X \circ \delta_Y=0, \quad \forall X,Y \in \tmF.
\]
\end{definition}

If $\delta$ is as above, it induces a ($C^\infty$) partial connection
\[
\delta : C^\infty_{\tM}(E) \to C^\infty_{\tM}( F^\ast\otimes E)
\]
such that, for $X\in \tmF$ and
$\sigma\in \mE$, we have $\delta_X(\sigma)\in \mE$.
Thus
\[
\delta\oplus\bar\partial_E : C^\infty_{\tM}(E) \to C^\infty_{\tM}( (F^\ast\oplus \overline{T^\ast\pi})\otimes E)
\]
is a partial connection. We say that a connection $\nabla:
C^\infty_{\tM}(E) \to
C^\infty_{\tM}((T^\ast\tM\otimes\C)\otimes E)$ extends
$\delta\oplus\bar\partial_E$ if
$\nabla_X=(\delta\oplus\bar\partial_E)_X$ for all sections $X$
of $F\oplus \overline{T\pi}$. Such a connection $\nabla$ always
exists (cf. \cite{BB2}).

We have the following ``relative Bott vanishing'' theorem for
actions of deformations of foliations:

\begin{theorem}\label{relative Bott vanishing}
let  $(\tM, P, \pi)$ be a deformation of manifolds  and $\tmF$ a
deformation of foliations on $\tM$ of rank $p$. Assume that
$S(\mF)=\emptyset$. Let $\mE$ be the sheaf of $\OM$-sections of
an $\OM$-bundle $E$ over $\tM$. Assume there exists a flat
partial $\OM$-connection $\delta$ for $\mE$ along $\tmF$. Then, for  any connection $\nabla$ for $E$ extending $\delta\oplus\bar\partial_E$, denoting by
$\iota_t: M_t \hookrightarrow \tM$ the natural embedding, it
follows
\[
\iota_t^\ast (\varphi(\nabla))=0,
\]
for all $t\in P$ and all symmetric homogeneous polynomials
$\varphi$ of degree $d>n-p$.
\end{theorem}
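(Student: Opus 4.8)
The plan is to reduce the statement to the classical Bott vanishing theorem applied fiberwise. First I would fix $t\in P$ and analyze the pullback connection $\iota_t^\ast\nabla$ on the bundle $E_t:=\iota_t^\ast E$ over $M_t$. Since $E$ is an $\OM$-bundle, $E_t$ is a holomorphic bundle over $M_t$ (its transition matrices restrict to holomorphic ones), and the restriction $\iota_t^\ast\debar_E$ is precisely the Dolbeault operator $\debar_{E_t}$ defining this holomorphic structure, because in an $\OM$-frame both are given by the same formula $\sum (\de f_j/\de\bar z_k)\,d\bar z_k\otimes\sigma_j$ and the $\bar z$-derivatives are intrinsic to the fiber. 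Similarly, $\iota_t^\ast\delta$ induces a partial connection $\delta_t$ for $E_t$ along the holomorphic foliation $F_t:=\iota_t^\ast\tilde F$ on $M_t$ (here I use $S(\mF)=\emptyset$, so $F_t$ is an honest subbundle of $TM_t$), and the flatness and Leibniz properties of $\delta$ pass to $\delta_t$ since they are fiberwise conditions. Thus $\iota_t^\ast\nabla$ is a connection for $E_t$ extending the flat partial connection $\delta_t\oplus\debar_{E_t}$ along $F_t\oplus\overline{TM_t}$.

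Second, I would invoke naturality of Chern--Weil forms under pullback: $\iota_t^\ast(\varphi(\nabla))=\varphi(\iota_t^\ast\nabla)$ for any symmetric homogeneous polynomial $\varphi$, since the curvature of the pullback connection is the pullback of the curvature. Hence it suffices to show $\varphi(\iota_t^\ast\nabla)=0$ on $M_t$ whenever $\deg\varphi=d>n-p$. This is exactly the content of the Bott vanishing theorem (cf. \cite{BB2}, \cite{Su2}) applied to the holomorphic bundle $E_t$ on the $n$-dimensional complex manifold $M_t$, equipped with a connection extending a flat partial connection along a rank-$p$ holomorphic foliation: the curvature, restricted to directions in $F_t$ and in $\overline{TM_t}$, has bidegree controlled so that $\varphi(\iota_t^\ast\nabla)$, a form of bidegree $(d,d)$, must vanish once $2d>2(n-p)+$ (the number of remaining transverse holomorphic directions is $n-p$), forcing $\varphi(\iota_t^\ast\nabla)=0$.

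The only point requiring care is the verification that pullback to a fiber genuinely identifies the relative $\debar$-connection with the fiberwise Dolbeault operator and that the abstract flat partial $\OM$-connection $\delta$ descends to a flat partial connection on $E_t$ in the sense required by the classical theorem. This is where the $\OM$-structure is used essentially: the transition matrices being sections of $\OM$ is exactly what guarantees $E_t$ is holomorphic and $F_t$ is a holomorphic subbundle, so that the classical Bott vanishing hypotheses are met. I expect this bookkeeping — tracking how the sheaf-theoretic and bundle-theoretic objects on $\tM$ restrict compatibly to $M_t$ — to be the main (though essentially formal) obstacle; once it is in place, the result follows immediately from classical Bott vanishing on each fiber together with naturality of characteristic forms.
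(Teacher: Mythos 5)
Your argument is correct, but it is organized differently from the paper's. The paper never leaves the total space: choosing an $\OM$-frame $\{s_1,\dots,s_p\}$ of $\tilde F$ and using $[s_j,\de/\de\overline{z}_k]=0$ together with the fact that $\OM$-sections generate the $C^\infty$ sections of $E$, it shows directly that the curvature $K$ of $\nabla$ vanishes on every pair of arguments taken from $\tilde F\oplus\overline{T\pi}$; hence the entries of $K$ lie in the ideal generated by the $n-p$ one-forms dual to a complement $F_1$ of $\tilde F$ in $T\pi$ together with $dt_1,\dots,dt_s$, so for $d>n-p$ one gets $\varphi(\nabla)=\sum_j\omega_j\wedge dt_j$, which is killed by $\iota_t^\ast$. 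You instead pull back first and quote the classical Bott vanishing theorem fiberwise. The two proofs rest on the same curvature identities; yours buys the ability to cite \cite{BB2} as a black box at the price of the bookkeeping you flag, namely that $\iota_t^\ast E$ is holomorphic, that $\iota_t^\ast\debar_E$ is its Dolbeault operator, and that $\iota_t^\ast\delta$ is a flat holomorphic action of the rank-$p$ holomorphic involutive subbundle $F_t\subset TM_t$ --- verifications that are indeed routine in $\OM$-frames and that the paper carries out elsewhere for the normal bundle (Proposition \ref{relative action}). Two small points to tighten: the remark that $\varphi(\iota_t^\ast\nabla)$ has ``bidegree $(d,d)$'' is neither needed nor quite accurate for an arbitrary extension $\nabla$ (the correct mechanism is simply that each curvature entry of $\iota_t^\ast\nabla$ lies in the ideal generated by the $n-p$ one-forms annihilating $F_t\oplus\overline{TM_t}$, so any invariant polynomial of degree $d>n-p$ in these entries vanishes); and you should record why $(\iota_t^\ast\nabla)_X$ is well defined and equal to the restriction of $(\delta\oplus\debar_E)_X$ for $X$ a section of $F_t\oplus\overline{TM_t}$, which holds because such $X$ are tangent to the fiber. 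Neither point affects correctness.
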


\begin{proof}
Let $\tF$ be the $\OM$-bundle whose associated sheaf of
sections is $\tmF$. Write
\[
T \tM\otimes \C= \tF \oplus F_1\oplus \overline{T\pi} \oplus \pi^\ast(TP),
\]
where $F_1$ is any $C^\infty$ complement of $\tF$ in $T\pi$.

Let $K$ be the curvature  of $\nabla$. Let $\{s_1, \ldots,
s_p\}$ be a local $\OM$-frame for $\tF$, and $\{\frac{\de}{\de
\overline{z}_1},\ldots, \frac{\de}{\de \overline{z}_n}\}$ the
natural frame for $\overline{T\pi}$ in adapted deformation
coordinates. Since $\tF$ is an $\OM$-subbundle of $T\pi$, we
can write $s_j=\sum_{k=1}^n a_k(z,t)\frac{\de}{\de z_k}$ for
$j=1,\ldots, p$ and $a_k\in \OM$.  Hence, $[s_j,\frac{\de}{\de
\overline{z}_k}]=0$ for $j=1,\ldots, p$ and $k=1,\ldots, n$.

Arguing similarly as in the proof of \cite[Prop. 3.27]{BB2}
(see also \cite[Thm. 6.1]{ABT}) since $\OM$-sections of $E$
generate as $C^\infty_{\tM}$-module the sheaf of
$C^\infty$-sections of $E$, one can see that
\[
K(s_j,s_k)=K(s_j,\frac{\de}{\de \overline{z}_h})=K(\frac{\de}{\de \overline{z}_h}, \frac{\de}{\de \overline{z}_l})=0
\]
for all $j,k=1,\ldots, p$ and $h,l=1,\ldots, n$. For instance,
given $\sigma$ an $\OM$-section of $E$, we have
\[
K(s_j,\frac{\de}{\de \overline{z}_h})(\sigma)=\nabla_{s_j}(\nabla_{\frac{\de}{\de \overline{z}_h}} \sigma)-
\nabla_{\frac{\de}{\de \overline{z}_h}}(\nabla_{s_j} \sigma)-\nabla_{[s_j,\frac{\de}{\de \overline{z}_h}]}\sigma=0,
\]
because $\nabla_{\frac{\de}{\de \overline{z}_h}}
\sigma=(\debar_E)_{\frac{\de}{\de \overline{z}_h}}\sigma=0$ by
definition, since $\sigma$ is an $\OM$-section; $\nabla_{s_j}
\sigma$ is another $\OM$-section of $E$, hence
$\nabla_{\frac{\de}{\de \overline{z}_h}}(\nabla_{s_j}
\sigma)=(\debar_E)_{\frac{\de}{\de
\overline{z}_h}}(\nabla_{s_j} \sigma)=0$ and
$[s_j,\frac{\de}{\de \overline{z}_h}]=0$.

As a consequence, the entries of the matrix representing $K$
are $2$-forms belonging to the ideal generated by a dual basis
of $F_1$ (which has dimension $n-p$) and by $dt_1,\ldots,
dt_s$, where these latter are a basis of $\pi^\ast(T^\ast P)$.
Therefore, if $\varphi$ has degree greater than $n-p$, it
follows that
\[
\varphi(\nabla)=\sum \omega_j \wedge dt_j,
\]
for some $2d-1$ forms $\omega_j$, hence,
$\iota^\ast(\varphi(\nabla))=0$.
\end{proof}

We recall that if $M$ is a complex manifold and $\mF$ is a
non-singular holomorphic foliation on $M$ then there exists a
natural holomorphic partial connection $\delta$ for the normal
bundle of the foliation $\mN_{\mF}$ along $\mF$ given by the so
called {\sl Baum-Bott action} (see \cite{BB2}, \cite{Su2}).
Such a connection is {\sl flat}, in the sense that $\delta\circ
\delta=0$. It is defined as follows:
\begin{equation}\label{Bott}
\delta_X(\sigma):=\rho([X,\tilde{\sigma}])
\end{equation}
where $\sigma\in \mN_{\mF}$ is a holomorphic section of the
normal bundle to the foliation, $\tilde{\sigma}\in \Ol_M(TM)$
is a holomorphic section of the tangent bundle to $M$ such that
$\rho(\tilde{\sigma})=\sigma$, where $\rho: \Ol_M(TM)\to
\mN_{\mF}$ is the natural projection, and $X\in \mF$.

We are going to show that a deformation of foliations gives
rise to a flat partial $\OM$-connection for $\mNtF$ along
$\tmF$ such that its ``restriction'' to each fiber $M_t$ is the
holomorphic flat partial connection for the normal bundle to
$\mF_t$ given by the Baum-Bott action:

\begin{proposition}\label{relative action}
Let  $(\tM, P, \pi)$ be a deformation of manifolds  and $\tmF$ a
deformation of foliations on $\tM$. Assume that
$S(\tmF)=\emptyset$. Then there exists a flat partial
$\OM$-connection $\tilde{\delta}$ for $\mNtF$ along $\tmF$.
Moreover, if $\iota_t: M_t\hookrightarrow \tM$ is the natural
embedding, then $\iota_t^\ast(\tilde{\delta})$ is the
holomorphic flat partial connection for $\mN_{\mF}$ along
$\mF_t$ given by the Baum-Bott action.
\end{proposition}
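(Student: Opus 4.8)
The plan is to mimic the classical definition \eqref{Bott} of the Baum-Bott action, but working with $\OM$-sections instead of holomorphic ones. First I would fix an $\OM$-section $\sigma$ of $\mNtF$ over an open set $U\subset\tM$. Using the exact sequence \eqref{ex-OM} (which is a sequence of $\OM$-modules, and since $\OM(T\pi)$ is locally $\OM$-free) I would lift $\sigma$ locally to an $\OM$-section $\2\sigma$ of $T\pi$; here I use crucially that $S(\tmF)=\emptyset$, so $\tmF=\OM(\tilde F)$ for an $\OM$-subbundle $\tilde F$, hence $\mNtF=\OM(T\pi/\tilde F)$ is itself an $\OM$-bundle and the projection $\tr:\OM(T\pi)\to\mNtF$ is a surjection of locally free sheaves admitting local $\OM$-splittings. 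Then, for $X\in\tmF(U)$, I set
\[
\2\delta_X(\sigma):=\tr\bigl([X,\2\sigma]\bigr).
\]
This makes sense because $X,\2\sigma\in\OM(T\pi)(U)$ and, as noted at the end of Section \ref{def}, $\OM(T\pi)$ is an $\OM$-Lie algebra, so $[X,\2\sigma]\in\OM(T\pi)(U)$ and $\tr([X,\2\sigma])\in\mNtF(U)$.

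Next I would check that $\2\delta$ is well defined, i.e. independent of the lift $\2\sigma$, and that it satisfies the Leibniz-type identity and flatness of a flat partial $\OM$-connection. For well-definedness: any two lifts differ by an element $Y\in\tmF(U)$, and $\tr([X,Y])=0$ because $[\tmF,\tmF]\subset\tmF$ by the definition of a deformation of foliations; this is exactly the place where the involutivity hypothesis is used. The Leibniz rule $\2\delta_{fX}(g\sigma)=f\bigl(g\2\delta_X(\sigma)+dg(X)\sigma\bigr)$ for $f,g\in\OM$ follows from the Leibniz rule for the Lie bracket: lifting $g\sigma$ by $g\2\sigma$ one has $[fX,g\2\sigma]=fg[X,\2\sigma]+f\,(Xg)\,\2\sigma-g\,(\2\sigma f)\,X$, and applying $\tr$ kills the last term (it lies in $\tmF$) while the middle term projects to $f\,dg(X)\,\sigma$ since $Xg=dg(X)$ for $X\in T\pi$. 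Flatness $\2\delta_X\circ\2\delta_Y=0$ for $X,Y\in\tmF$ is the standard computation: expanding $\2\delta_X(\2\delta_Y(\sigma))-\2\delta_Y(\2\delta_X(\sigma))-\2\delta_{[X,Y]}(\sigma)$ using the Jacobi identity for $[\,\cdot\,,\cdot\,]$ on $\OM(T\pi)$ gives $0$, and since this holds one sees as in \cite{BB2} that each $\2\delta_X\circ\2\delta_Y$ vanishes (one also needs that $\2\delta$ descends to the structure of an action in the sense of Baum-Bott, i.e. that $\tmF\to\mathrm{End}(\mNtF)$ is a Lie algebra homomorphism, which is precisely this computation).

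Finally I would verify the compatibility with the fibers. Restricting to $M_t$ amounts to applying $\otimes_{\mR}\mR/\pi^\ast\mI_t$. By Lemma \ref{lemmaB} and Proposition \ref{singu} (and since $S(\tmF)=\emptyset$ forces $M_t\not\subset S(\tmF)$), the sequence \eqref{ex-OM} restricts to the exact sequence \eqref{exNF} on $M_t$, and in particular $\iota_t^\ast\mNtF=\mN_{\mF_t}$ by \eqref{ugua}. Choosing the local $\OM$-generators as in \eqref{gen-F}, a section $\sigma$ of $\mN_{\mF_t}$ lifts to $\iota_t^\ast\2\sigma$ for some $\OM$-lift $\2\sigma$ of a section of $\mNtF$, the bracket $[X_j(z,t_0),\iota_t^\ast\2\sigma]$ computed on $M_{t_0}$ with the holomorphic vector fields in \eqref{gen-F} is the restriction of $[X_j,\2\sigma]$, and its projection by $\rho$ equals the restriction of $\tr([X_j,\2\sigma])$. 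Hence $\iota_{t_0}^\ast(\2\delta_{X}(\sigma))=\delta^{\mathrm{BB}}_{X|_{M_{t_0}}}(\sigma)$, which is the definition \eqref{Bott} of the Baum-Bott action on $M_{t_0}$. I expect the main obstacle to be purely bookkeeping: making sure that the tensor-product restriction $\otimes_{\mR}\mR/\pi^\ast\mI_t$ interacts correctly with the Lie bracket and with the choice of $\OM$-lifts — i.e. that restriction of a lift is a lift, and that the bracket of restrictions is the restriction of the bracket — none of which is deep but all of which must be tracked carefully through \eqref{cambio} and the definition of $\OM$.
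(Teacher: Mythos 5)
Your proposal is correct and follows essentially the same route as the paper: define $\tilde{\delta}_X(\sigma):=\tilde{\rho}([X,\tilde{\sigma}])$ via an $\OM$-lift, use involutivity of $\tmF$ for well-definedness, the Jacobi identity for flatness, and compare the formula fiberwise with \eqref{Bott} to identify the restriction with the Baum-Bott action. The paper's proof is just a terser version of this same argument; your added detail (the explicit Leibniz computation and the bookkeeping of $\otimes_{\mR}\mR/\pi^\ast\mI_t$ against lifts and brackets) fills in exactly the steps the paper leaves implicit.
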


\begin{proof}
Let $\tilde{\rho}: \OM(T\pi)\to \mNtF$ be the natural
projection. For $X\in \tmF$ and $\sigma\in\mNtF$ we define
\begin{equation}\label{relative-Bott}
\tilde{\delta}_X(\sigma):=\tilde{\rho}([X,\tilde{\sigma}]),
\end{equation}
where $\tilde{\sigma}\in \OM(T\pi)$ is such that
$\tilde{\rho}(\tilde{\sigma})=\sigma$. Involutivity of $\tmF$
shows that $\tilde{\delta}$ is well-defined and  flatness
follows from the Jacobi identity, so that $\tilde{\delta}$ is a
partial $\OM$-connection  for $\mNtF$ along $\tmF$.

Comparing \eqref{relative-Bott} with \eqref{Bott}, it is easy
to see that $\iota^\ast_t(\tilde{\delta})$ is the  flat partial
$\Ol_{M_t}$-connection for $\mN_{\mF_t}$ along $\mF_t$ given by
the Baum-Bott action.
\end{proof}

In particular, Theorem \ref{relative Bott vanishing} and
Proposition \ref{relative action} imply the following:

\begin{corollary}\label{vBott}
Let  $(\tM, P, \pi)$ be a deformation of manifolds  and $\tmF$ a
deformation of foliations on $\tM$. Assume that
$S(\tmF)=\emptyset$. Then there exists a connection $\nabla$
for $\mNtF$ such that, denoting by $\iota_t: M_t
\hookrightarrow \tM$ the natural embedding, it follows
\[
\iota_t^\ast (\varphi(\nabla))=0,
\]
for all $t\in P$ and all symmetric homogeneous polynomials
$\varphi$ of degree $d>n-p$.
\end{corollary}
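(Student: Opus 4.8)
The plan is to deduce Corollary \ref{vBott} by simply combining the two preceding results applied to the $\OM$-bundle $E=\mNtF$. First I would invoke Proposition \ref{relative action}: under the hypothesis $S(\tmF)=\emptyset$ the sheaf $\mNtF$ is the sheaf of $\OM$-sections of an $\OM$-bundle (this uses that $\OM(T\pi)$ is $\OM$-locally free and that, by the definition of $S(\tmF)$ together with Lemma \ref{lemmaA}, $\mNtF$ is locally free over $\OM$), and Proposition \ref{relative action} produces a flat partial $\OM$-connection $\tilde\delta$ for $\mNtF$ along $\tmF$.

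Next I would fix any connection $\nabla$ on $E=\mNtF$ extending $\tilde\delta\oplus\bar\partial_E$; such a $\nabla$ exists by the remark preceding Theorem \ref{relative Bott vanishing} (cf. \cite{BB2}). Then Theorem \ref{relative Bott vanishing}, applied with this $E$, this $\delta=\tilde\delta$, and noting that $\tmF$ has rank $p$, yields $\iota_t^\ast(\varphi(\nabla))=0$ for all $t\in P$ and all symmetric homogeneous polynomials $\varphi$ of degree $d>n-p$, which is exactly the claimed statement.

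There is essentially no obstacle: the corollary is a formal consequence and the only point requiring a word of care is that $\mNtF$ is genuinely an $\OM$-bundle when $S(\tmF)=\emptyset$, so that Theorem \ref{relative Bott vanishing} and Proposition \ref{relative action} apply verbatim. Concretely the proof reads:

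\begin{proof}
Since $S(\tmF)=\emptyset$, by Lemma \ref{lemmaA} the sheaf $\mNtF$ has maximal rank everywhere and hence, being $\OM$-coherent by \eqref{ex-OM}, it is the sheaf of $\OM$-sections of an $\OM$-bundle $E$ over $\tM$. By Proposition \ref{relative action} there is a flat partial $\OM$-connection $\tilde\delta$ for $E=\mNtF$ along $\tmF$. Choose a connection $\nabla$ for $E$ extending $\tilde\delta\oplus\bar\partial_E$ (this is possible by \cite{BB2}). Applying Theorem \ref{relative Bott vanishing} to $E$ and $\delta=\tilde\delta$, and recalling that $\tmF$ has rank $p$, we obtain $\iota_t^\ast(\varphi(\nabla))=0$ for all $t\in P$ and all symmetric homogeneous polynomials $\varphi$ of degree $d>n-p$.
\end{proof}
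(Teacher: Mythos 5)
Your proof is correct and is exactly the argument the paper intends: the corollary is stated as an immediate consequence of Theorem \ref{relative Bott vanishing} combined with Proposition \ref{relative action}, applied to the $\OM$-bundle whose sheaf of $\OM$-sections is $\mNtF$ (locally free since $S(\tmF)=\emptyset$). Your extra remark verifying that $\mNtF$ is genuinely an $\OM$-bundle in this case is a reasonable point of care that the paper leaves implicit.
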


\section{Residues of Baum-Bott types on  deformations of manifolds}\label{Bottres}

In this section we assume $(\tM, P, \pi)$ is a deformations of
manifolds and $\tmF$ is a deformation of foliations on $\tM$.
We also assume that $\mNtF$ admits a {\sl $C^\infty$ locally
free resolution}, namely, there exists an  exact sequence of
${\mathcal C}^\infty_{\tM}$-modules:
\begin{equation}\label{fin-siz}
0\to \mE_q\to \cdots \to \mE_0 \to \mNtF\otimes_{\OM} {\mathcal C}^\infty_{\tM} \to 0,
\end{equation}
such that each $\mE_j$ is locally ${\mathcal
C}_{\tM}^\infty$-free.

\begin{remark}
If $\tmF$ is locally $\OM$-free then such a condition is
satisfied with $q=1$ and $\mE_1=\tmF\otimes_{\OM} {\mathcal
C}^\infty_{\tM} $, $\mE_0=\OM(T\pi)\otimes_{\OM} {\mathcal
C}^\infty_{\tM} $.
\end{remark}

Let $E_j$ be the vector bundle over $\tM$ whose sheaf of
$C^\infty$ sections is $\mE_j$. Then $\mNtF$ is a virtual bundle in the
$K$-group $K(\tM)$ and its total Chern class is defined as
\[
c(\mNtF)=\prod_{i=0}^q c(E_i)^{(-1)^i}.
\]
We briefly sketch here the theory we need, and refer the reader
to \cite[Section 4]{BB2}, \cite{L} and \cite[Ch.II,  8]{Su2}
for details.

Let $\tU_1$ be an open neighborhood of $S(\tmF)$ and let
$\tU_0:=\tM\setminus S(\tmF)$. We denote by $(\nabla_0^\bullet,
\nabla_1^\bullet)$ the family of $q+1$ connections compatible
with \eqref{fin-siz} and adapted to the covering ${\mathcal
\tU}:=\{\tU_0, \tU_1\}$ of $\tM$. Namely,
$\nabla_l^\bullet=(\nabla_l^q,\ldots, \nabla_l^0)$, $l=0,1$ is
a family
 such that $\nabla^j_l$ is a connection for
$E_j|_{\tU_l}$, $j=0,\ldots, q$, $l=0,1$ and the following
diagram is commutative for $i=1,\ldots, q$ and $l=0,1$:
\begin{equation}
\begin{CD}
 & E_i|_{\tU_l} @>\nabla_l^i>> C^\infty_{\tM}(T^\ast \tM \otimes E_i|_{\tU_l}) \\
 & @VVV  @VVV \\
 & E_{i-1}|_{\tU_l} @>\nabla_l^{i-1}>>
C^\infty_{\tM}(T^\ast \tM \otimes E_{i-1}|_{\tU_l})\\
\end{CD}
\end{equation}
Moreover, let $N_{\tF}$ be the vector bundle on $\tU_0$ whose
sheaf of sections is $\mNtF\otimes_{\OM} {\mathcal C}^\infty_{\tM} |_{\tU_0}$. Let $\nabla$ be an
extension of the  flat partial $\OM$-connection
$\tilde{\delta}$ for $\mNtF|_{\tU_0}$ along $\tmF$ given by
Proposition \ref{relative action}. It is then possible to
choose $\nabla_0^\bullet$ to be compatible with $\nabla$ (in
the sense explained before).

Now, we let $\varphi$ be a homogeneous symmetric polynomial of
degree $d>n-p$. One can define the class $\va(\mNtF )$ in the
\v{C}ech-de Rham cohomology \v{H}$^{2d}({\mathcal \tU})$ which
is represented by
\[
\va(\nabla_\ast^\bullet):=(\va(\nabla_0^\bullet),
\va(\nabla_1^\bullet), \va(\nabla_0^\bullet,
\nabla_1^\bullet)),
\]
where, by the compatibility condition,
$\va(\nabla_0^\bullet)=\va(\nabla)$ is a $2d$ form on $\tU_0$,
$\va(\nabla_1^\bullet)$ is the $2d$ form on $\tU_1$ associated
to the family $\nabla_1^\bullet$ and $\va(\nabla_0^\bullet,
\nabla_1^\bullet)$ is a $(2d-1)$-form on $\tU_0\cap \tU_1$ such
that $d \va(\nabla_0^\bullet,
\nabla_1^\bullet)=\va(\nabla_1^\bullet)-\va(\nabla_0^\bullet)$.
The \v{C}ech-de Rham cohomology \v{H}$^*({\mathcal \tU})$ is
naturally isomorphic to the de Rham cohomology $H^*_{dR}(\tM,
\C)$.

If $M_t\not\subset S(\tmF)$, tensorizing \eqref{fin-siz} with
$\mR/\pi^\ast \mI_t$ we obtain the following exact sequence of
${\mathcal C}^\infty_{M_t}$-modules (cf. the proof of Lemma
\ref{lemmaB}):
\begin{equation}\label{fin-sizF} 0\to \mE_q\otimes_{\mR} \mR/\pi^\ast \mI_t\to
\cdots \to \mE_0\otimes_{\mR} \mR/\pi^\ast \mI_t\to \mNtF\otimes_{\OM} {\mathcal C}^\infty_{\tM} \otimes_{\mR} \mR/\pi^\ast \mI_t \to 0,
\end{equation}
where $\mE_j\otimes_{\mR} \mR/\pi^\ast \mI_t$ is the sheaf of
$C^\infty$ sections of the restriction of the bundle $E_j$ to
$M_t$. By \eqref{ugua}, it is then easy to see the following:

\begin{lemma}
Let  $t\in P$ and let $\iota_t: M_t\to \tM$ be the natural
embedding. If $M_t\not\subset S(\tmF)$ then
$(\iota_t^\ast(\nabla_0^\bullet),\iota_t^\ast(\nabla_1^\bullet))$
is a family of connections for the virtual bundle $\mN_{\mF_t}$
compatible with \eqref{fin-sizF}.
\end{lemma}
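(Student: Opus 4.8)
The plan is to verify the two defining properties of a family of connections compatible with \eqref{fin-sizF}: first, that each $\iota_t^\ast(\nabla_l^j)$ is a genuine connection on the restricted bundle $E_j|_{M_t}$, and second, that the restricted connections make the analogue of the commutative diagram above commute over each of the open sets $\tU_l\cap M_t$. Once this is done, and once we check that the covering $\{\tU_0\cap M_t,\tU_1\cap M_t\}$ of $M_t$ is again of the required type (i.e.\ $\tU_1\cap M_t$ is a neighborhood of $S(\mF_t)$ and $\tU_0\cap M_t=M_t\setminus S(\mF_t)$, which is exactly Proposition \ref{singu}), the statement follows.

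First I would recall that pulling back a connection along an embedding always yields a connection: if $\nabla_l^j$ is a connection on $E_j$ over $\tU_l$, then for a section $\sigma$ of $E_j|_{M_t}$ one extends $\sigma$ locally to a $C^\infty$ section $\tilde\sigma$ of $E_j$ near $M_t$ and sets $(\iota_t^\ast\nabla_l^j)_X\sigma:=\iota_t^\ast\big((\nabla_l^j)_{\tilde X}\tilde\sigma\big)$ for $X$ tangent to $M_t$ and $\tilde X$ any extension; the Leibniz rule and $C^\infty$-linearity in $X$ are inherited, and independence of the extensions is the standard argument. The point that makes the restriction meaningful as a family compatible with \eqref{fin-sizF} rather than with some other resolution is \eqref{ugua}: it identifies $\mN_{\mF_t}$ with $\mNtF\otimes_{\mR}\mR/\pi^\ast\mI_t$, so that the terminal term of the restricted complex \eqref{fin-sizF} is indeed (the sheaf of $C^\infty$ sections of) the virtual bundle $\mN_{\mF_t}$, and the bundles $E_j|_{M_t}$ give a $C^\infty$ locally free resolution of it.

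Next I would address compatibility with the complex, i.e.\ commutativity of the restricted diagram. The bundle maps $E_j\to E_{j-1}$ in the resolution \eqref{fin-siz} restrict to the bundle maps $E_j|_{M_t}\to E_{j-1}|_{M_t}$ appearing in \eqref{fin-sizF}; call them $\psi_j$ and $\iota_t^\ast\psi_j$. Commutativity of the original square says $(\nabla_l^{j-1})_{\tilde X}\circ\psi_j=\psi_j\circ(\nabla_l^j)_{\tilde X}$ as operators on sections near $M_t$ (for $\tilde X$ a section of $T\tM$). Applying $\iota_t^\ast$ to this identity, and using that $\iota_t^\ast(\psi_j\tilde\sigma)=(\iota_t^\ast\psi_j)(\iota_t^\ast\tilde\sigma)$ together with the definition of the pulled-back connection for $X$ tangent to $M_t$, yields $(\iota_t^\ast\nabla_l^{j-1})_X\circ(\iota_t^\ast\psi_j)=(\iota_t^\ast\psi_j)\circ(\iota_t^\ast\nabla_l^j)_X$, which is precisely the commutativity of the restricted square over $\tU_l\cap M_t$.

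The only genuinely nontrivial point — and the one I would write out with a little care — is that the covering $\{\tU_0\cap M_t,\tU_1\cap M_t\}$ is adapted to $S(\mF_t)$ in $M_t$, for which one invokes Proposition \ref{singu}: $S(\mF_t)=S(\tmF)\cap M_t$, so $\tU_1\cap M_t$ is a neighborhood of $S(\mF_t)$ and $\tU_0\cap M_t=(\tM\setminus S(\tmF))\cap M_t=M_t\setminus S(\mF_t)$. Everything else is a routine naturality check for pullbacks of connections under the embedding $\iota_t$; no new ideas are needed beyond \eqref{ugua} and Proposition \ref{singu}, both already established.
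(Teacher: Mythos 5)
Your proposal is correct and follows the same route the paper intends: the paper gives no detailed proof, simply invoking \eqref{ugua} (which identifies $\mN_{\mF_t}$ with $\mNtF\otimes_{\mR}\mR/\pi^\ast\mI_t$ so that the restricted resolution \eqref{fin-sizF} is a resolution of the right sheaf) and leaving the routine verification that pulled-back connections remain connections and that the compatibility diagrams restrict. Your additional appeal to Proposition \ref{singu} to check that the restricted covering is adapted to $S(\mF_t)$ is exactly the point the paper uses immediately after the lemma, so nothing is missing.
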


By Corollary \ref{vBott} and by the compatibility condition, it
follows that for all homogeneous symmetric polynomials $\va$ of
degree $d>n-p$, the class $\va(\mN_{\mF_t})$ is represented in
the \v{C}ech-de~Rham cohomology associated to the covering
$\mathcal\tU\cap M_t$ of $M_t$ by the cocyle
\begin{equation*}
\begin{split}
 \va(\iota_t^\ast\nabla^\bullet_\ast)&=(\iota_t^\ast\va(\nabla_0^\bullet),
\iota_t^\ast\va(\nabla_1^\bullet), \iota_t^\ast\va(\nabla_0^\bullet,
\nabla_1^\bullet))=(\iota_t^\ast\va(\nabla),
\iota_t^\ast\va(\nabla_1^\bullet), \iota_t^\ast\va(\nabla_0^\bullet,
\nabla_1^\bullet))\\&=(0,
\iota_t^\ast\va(\nabla_1^\bullet), \iota_t^\ast\va(\nabla_0^\bullet,
\nabla_1^\bullet)).
\end{split}
\end{equation*}
Hence, since by Proposition \ref{singu}, $\tU_0\cap
M_t=M_t\setminus S(\mF_t)$, the previous cocycle defines a
localization of $\va(\mN_{\mF_t})$, call it $\va(\mN_{\mF_t},
\mF_t)$, in the relative \v{C}ech-de Rham cohomology
\v{H}$^{2d}(\mathcal\tU\cap M_t, M_t\setminus S(\mF_t))$. The
{\sl Baum-Bott residue}  is the image of $\va(\mN_{\mF_t},
\mF_t)$ by the Alexander homomorphism $A:
\hbox{\v{H}}^{2d}(\mathcal\tU\cap M_t, M_t\setminus
S(\mF_t))\to H_{dR}^{2n-2d}(\tU_1\cap M_t)^\ast$. If $S(\mF_t)$
is made of $k$ connected components than
$H_{dR}^{2n-2d}(\tU_1\cap M_t)^\ast$ is a direct sum of $k$
addends, and we can consider the Baum-Bott residue at each
connected component of $S(\mF_t)$.  If $\tU_1\cap M_t$ is a
regular neighborhood of $S(\mF_t)$ then the above residue can
be thought of as being in $H_{2n-2d}(S(\mF_t), \C)$.

Now, let  $S'(\tmF)\subseteq S(\tmF)$ be a connected component.
We assume that
\[
\hbox{$S_t:=M_t\cap S'(\tmF)$ is compact}\quad \forall t\in P.
\]
\begin{remark}
Note that even if $S(\tmF)$ is connected by assumption,
$S(\mF_t)$ might not.
\end{remark}
Let $\tilde{R}$ be a real manifold of dimension $2n+s$ with
boundary such that  $S'(\tmF)$ is contained in the interior of
$\tilde{R}$, no other components of $S(\tmF)$ intersect
$\tilde{R}$ and $\de \tilde{R}$ is transverse to $M_t$ for all
$t\in P$. Moreover, we can take $\tilde{R}$ in such a way that
$R_t:= \tilde{R}\cap M_t$ is compact for all $t\in P$.

 We let
$U_t:=\tU_1\cap M_t$ and denote by $H_{dR}^{\ast}(U_t)$ the de
Rham cohomology of $U_t$. By the previous construction, we can
express the {\sl Baum-Bott residue}
$\hbox{BB}_\va(\mF_t;S_t)\in H_{dR}^{2n-2d}(U_t)^\ast$ as
follows:
\begin{equation}\label{expre}
\hbox{BB}_\va(\mF_t;S_t): H_{dR}^{2n-2d}(U_t) \ni [\tau]\mapsto \int_{R_t} \iota_t^\ast\va(\nabla_1^\bullet)\wedge \tau -
\int_{\de R_t}\iota_t^\ast\va(\nabla_0^\bullet,\nabla_1^\bullet)\wedge \tau.
\end{equation}

\begin{remark}
Note that there exists a natural morphism
$H_{dR}^{2n-2d}(U_t)^\ast\rightarrow H_{dR}^{2n-2d}(M_t)^\ast$.
Therefore one can remove the dependence on $\tU_1$ in this
construction. Moreover, if $M_t$ is compact, then
$H_{dR}^{2n-2d}(M_t)^\ast=H_{2n-2d}(M_t)$.
\end{remark}

Now we are in good shape to prove our main result:

\begin{theorem}\label{main}
Let  $(\tM, P, \pi)$ be a deformation of manifolds  and $\tmF$
a deformation of foliations on $\tM$ of rank $p$. Suppose that
$\mNtF$ admits a  $C^\infty$ locally free resolution.  Let
$S'(\tmF)\subseteq S(\tmF)$ be a connected component of the
singular set of $\tmF$ and let $S_t:=M_t\cap S'(\tmF)$. Assume
that for all $t\in P$ the set $S_t$ is compact and $S_t\neq
M_t$. Let $\varphi$ be a homogeneous symmetric polynomial of
degree $d>n-p$. Under these assumptions, the Baum-Bott residue
$\hbox{BB}_\va(\mF_t;S_t)$ is continuous in $t\in P$. Namely,
for any $C^\infty$ $(2n-2d)$-form $\tilde{\tau}$ on $\tM$ such
that $\iota^\ast_t(\tilde{\tau})$ is closed for all $t\in P$,
\[
\lim_{t\to t_0}\hbox{BB}_\va(\mF_t;S_t)\left(\iota_t^\ast(\tilde{\tau})\right)
=\hbox{BB}_\va(\mF_{t_0};S_{t_0})\left(\iota_{t_0}^\ast(\tilde{\tau})\right).
\]
\end{theorem}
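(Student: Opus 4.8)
The plan is to prove continuity by exhibiting both residues as fiber integrals of a \emph{single} smooth form on $\tilde M$, so that the limit becomes continuity of the parameter integral of a smooth integrand over a smooth family of domains. First I would shrink $P$ to a small neighborhood of $t_0$ so that the ambient region $\tilde R$ of the last construction is available: $\tilde R$ is a compact manifold with corners containing $S'(\tmF)$ in its interior, with $\de\tilde R$ transverse to every $M_t$, and $R_t:=\tilde R\cap M_t$ compact. Using the family $(\nabla_0^\bullet,\nabla_1^\bullet)$ of connections compatible with the $C^\infty$ locally free resolution \eqref{fin-siz} and adapted to $\{\tU_0,\tU_1\}$, with $\nabla_0^\bullet$ compatible with a fixed extension $\nabla$ of the relative Baum--Bott connection $\tilde\delta$ of Proposition \ref{relative action}, I form the two global smooth forms $\va(\nabla_1^\bullet)$ on $\tU_1$ (a $2d$-form) and $\va(\nabla_0^\bullet,\nabla_1^\bullet)$ on $\tU_0\cap\tU_1$ (a $(2d-1)$-form), \emph{before restricting to any fiber}. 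The key point, already isolated in the excerpt, is that by Corollary \ref{vBott} one has $\iota_t^\ast\va(\nabla_0^\bullet)=\iota_t^\ast\va(\nabla)=0$ for every $t$, which is exactly why the fiberwise residue in \eqref{expre} has the two-term form it does.

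The second step is to rewrite the right-hand side of \eqref{expre} so that the $t$-dependence sits only in the domain of integration and in the pullback $\iota_t^\ast$, not in the forms being integrated. Concretely: fix the smooth test form $\tilde\tau$ on $\tilde M$ with $\iota_t^\ast(\tilde\tau)$ closed for all $t$. Then
\[
\hbox{BB}_\va(\mF_t;S_t)(\iota_t^\ast\tilde\tau)
=\int_{R_t}\iota_t^\ast\bigl(\va(\nabla_1^\bullet)\wedge\tilde\tau\bigr)
-\int_{\de R_t}\iota_t^\ast\bigl(\va(\nabla_0^\bullet,\nabla_1^\bullet)\wedge\tilde\tau\bigr),
\]
where $\va(\nabla_1^\bullet)\wedge\tilde\tau$ is a fixed smooth $2n$-form on $\tU_1\subset\tilde M$ and $\va(\nabla_0^\bullet,\nabla_1^\bullet)\wedge\tilde\tau$ a fixed smooth $(2n-1)$-form on $\tU_0\cap\tU_1$. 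Since $\de\tilde R$ is transverse to all $M_t$ and $R_t$ stays inside $\tU_1$ while $\de R_t$ stays inside $\tU_0\cap\tU_1\cap\tilde R$ (no other component of $S(\tmF)$ meets $\tilde R$, so $\de\tilde R$ avoids $S(\tmF)$ and lies in $\tU_0$), both integrands are defined on a neighborhood of the relevant cycle uniformly in $t$. I would also check, via \eqref{expre} and the Alexander homomorphism description, that this expression is genuinely independent of the choices (the connections, the regular neighborhood, the representative $\tilde\tau$ of its fiberwise class) so that it is legitimately ``the'' residue evaluated on $[\iota_t^\ast\tilde\tau]$; closedness of $\iota_t^\ast\tilde\tau$ plus Stokes on $R_t$ handles the representative independence, exactly as in the $S(\tmF)=\emptyset$ vanishing argument feeding into Corollary \ref{vBott}.

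The third step is the analytic heart: continuity in $t$ of the two terms. For this I would pick, near $t_0$, a smooth trivialization of $\pi:\tilde R\to P$ as a fiber bundle with corners — possible because $\pi$ is a smooth submersion with $\de\tilde R$ transverse to the fibers, so by Ehresmann's theorem (manifold-with-corners version) there is a diffeomorphism $\tilde R\cong R_{t_0}\times W$ over a neighborhood $W$ of $t_0$ carrying $R_t$ to $R_{t_0}\times\{t\}$ and $\de R_t$ to $\de R_{t_0}\times\{t\}$. Pulling back $\va(\nabla_1^\bullet)\wedge\tilde\tau$ and $\va(\nabla_0^\bullet,\nabla_1^\bullet)\wedge\tilde\tau$ under this trivialization and restricting to the slices, each term becomes $\int_{R_{t_0}}\Theta(\cdot,t)$, resp.\ $\int_{\de R_{t_0}}\Xi(\cdot,t)$, for smooth families $\Theta,\Xi$ of top-degree forms on the fixed compact domains $R_{t_0}$, $\de R_{t_0}$, depending smoothly — in particular continuously — on the parameter $t$. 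Continuity of a parameter integral of a continuous integrand over a fixed compact set is then immediate, and letting $t\to t_0$ gives the claimed limit. The main obstacle I anticipate is not any single estimate but the bookkeeping around the \v Cech--de Rham / Alexander-duality formulation: one must be careful that the form $\va(\nabla_0^\bullet,\nabla_1^\bullet)$ is only defined on the overlap $\tU_0\cap\tU_1$ and that $\de R_t$ genuinely lies there for all nearby $t$ (this uses that $S'(\tmF)$ sits in the interior of $\tilde R$ with a uniform margin and that $S_t$ stays compact), and that the various independence-of-choice statements are invoked in the right order so that the fiber-integral formula \eqref{expre} may be applied with one ambient $\tilde\tau$ rather than a $t$-dependent family of fiberwise representatives.
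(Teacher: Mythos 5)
Your proposal is correct and follows essentially the same route as the paper: the paper's own proof is a one-line appeal to the construction preceding the theorem, namely that \eqref{expre} expresses $\hbox{BB}_\va(\mF_t;S_t)$ as integrals over $R_t$ and $\de R_t$ of restrictions of fixed smooth forms on $\tM$, whence continuity in $t$. You have simply made explicit the details (a single ambient test form, the local trivialization of $\tilde R$ over $P$, and the resulting continuity of the parameter integrals) that the authors leave implicit.
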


\begin{proof}
From the previous construction and \eqref{expre} it follows
that the Baum-Bott residues on $M_t$ are expressed by means of
smooth forms on $\tM$. Hence, they vary continuously.
\end{proof}

Note that, if $S_t$ is not connected and $S_t=\cup_\lambda
S_t^\lambda$ is its connected components decomposition, then
\[
\hbox{BB}_\va(\mF_t;S_t)=\sum_\lambda
\hbox{BB}_\va(\mF_t;S^\lambda_t).
\]

\section{An example}\label{examples}

In $\Po^3$ with homogeneous coordinates $[x_1:x_2:x_3:x_4]$ we
consider the vector field which is defined in the affine chart
$x_4\neq 0$  with coordinates $x=x_1/x_4, y=x_2/x_4, z=x_3/x_4$
by
\[
X(x,y,z):=x\frac{\de}{\de x}+x\frac{\de}{\de y}+y\frac{\de}{\de z}.
\]
The singularities are the line $L$ given by $x_1=x_2=0$ and the
point at infinity given by $Q:=[1:1:1:0]$ (see the next
expression \eqref{otherc}).

The vector field $X$ generates a one-dimensional foliation
$\mF$ given by $X: \Po^3\times \C \to T\Po^3$ on $\Po^3$. By
the Baum-Bott theorem, we can localize $\varphi(T\Po^3/\mF)$
for homogeneous symmetric polynomials $\varphi$ of degree $3$.
Such polynomials are essentially given by $c_1^3$, $c_1c_2$ and
$c_3$. Moreover, since $\mF$ is trivial, we obtain that
$\varphi(T\Po^3/\mF)=\varphi(T\Po^3)$. Let $\Ol(1)$ be the
hyperplane bundle on $\Po^3$ and let $\xi:=c_1(\Ol(1))\in
H^2_{dR}(\Po^3)$. From the Euler exact sequence, it follows
that $c(T\Po^3)=(1+\xi)^4$, from which
\begin{equation}\label{ctp}
\int_{\Po^3} c_1^3(T\Po^3)=64, \quad \int_{\Po^3} c_1c_2(T\Po^3)=24, \quad \int_{\Po^3} c_3(T\Po^3)=4.
\end{equation}
Changing coordinates, in the affine chart $x_3\neq 0$ with
coordinates $\tx=x_1/x_3, \ty=x_2/x_3, \tz=x_4/x_3$ the vector
field $X$ has the expression:
\begin{equation}\label{otherc}
X(\tx,\ty,\tz)=(\tx-\tx\ty)\frac{\de}{\de \tx}+(\tx-\ty^2)\frac{\de}{\de \ty}-\ty\tz \frac{\de}{\de \tz}.
\end{equation}
From this it follows that the first jet of $X$ at $Q$ is given
by the non-degenerate matrix
\[
A:=\left(
              \begin{array}{ccc}
                0 & -1 & 0 \\
                1 & -2 & 0 \\
                0 & 0 & -1 \\
              \end{array}
            \right).
\]
Hence since $Q$ is a non-degenerate isolated singularity for
$X$ it follows (see, {\sl e.g.} \cite[(0.7)]{BB2} or
\cite{Su2})
\begin{equation}\label{resi-nondeg}
\hbox{BB}_{\varphi}(X;Q)=\frac{\varphi(A)}{\det A},
\end{equation}
that is
\begin{equation}\label{expl-BB}
\hbox{BB}_{c_1^3}(X;Q)=27 \quad \hbox{BB}_{c_1c_2}(X;Q)=9 \quad \hbox{BB}_{c_3}(X;Q)=1.
\end{equation}
By the Baum-Bott theorem,
\[
\int_{\Po^3} \va(T\Po^3)=\hbox{BB}_{\varphi}(X;Q)+\hbox{BB}_{\varphi}(X;L).
\]
From this and by \eqref{ctp} and \eqref{expl-BB} we obtain
\begin{equation}\label{expl}
\hbox{BB}_{c_1^3}(X;L)=37 \quad \hbox{BB}_{c_1c_2}(X;L)=15 \quad \hbox{BB}_{c_3}(X;L)=3.
\end{equation}
However, computing such residues directly without using the
Baum-Bott theorem seem to be very complicated because the
singular set is not isolated.

We present now a deformation procedure which allows to compute
the previous residues and explain in practice how our Theorem
\ref{main-intro} works.

Let $\tM:=\Po^3\times (-1,1)$ and let  $\tmF$  be the
deformation of foliations defined by the vector fields $X_t$,
$t\in (-1,1)$, which on the chart $x_4\neq 0$ are defined as
\[
X_t(x,y,z)=(x+tz)\frac{\de}{\de x}+x\frac{\de}{\de y}+y\frac{\de}{\de z}.
\]
On the chart $x_3\neq 0$ the vector field $X_t$ is given by
\[
X(\tx,\ty,\tz)=(\tx-\tx\ty+t)\frac{\de}{\de \tx}+(\tx-\ty^2)\frac{\de}{\de \ty}-\ty\tz \frac{\de}{\de \tz}.
\]
The singularities of $X_t$ for $t\neq 0$ are given by
$O:=[0:0:0:1]$ and $P_j(t):=[u_{t,j}^2:u_{t,j}: 1:0]$ for
$j=1,2,3$, where the $u_{t,j}$'s are the three roots of the
equation $\lambda^3-\lambda^2-t=0$.

At the point $O$ the first jet of $X_t$, $t\neq 0$, is
non-degenerate and it is given by the matrix
\[
\left(
              \begin{array}{ccc}
                1 & 0 & t \\
                1 & 0 & 0 \\
                0 & 1 & 0 \\
              \end{array}
            \right).
\]
From this it and from \eqref{resi-nondeg},
\begin{equation}\label{XsO}
\hbox{BB}_{c_1^3}(X_t;O)=\frac{1}{t} \quad \hbox{BB}_{c_1c_2}(X_t;O)=0 \quad \hbox{BB}_{c_3}(X_t;O)=1.
\end{equation}

\begin{remark}
It is interesting to note that $\lim_{t\to
0}\hbox{BB}_{c_1^3}(X_t;O)=\infty$, namely, the residue by
itself is not continuous, but it is so when taken the sum of
the residues for the singularities which belong to the same
connected components in the ambient space $\tM$.
\end{remark}

At the point $P_j(t)$ the vector field $X_t$ has first jet
given by the matrix
\[
B(t,j):=\left(
              \begin{array}{ccc}
                1-u_{t,j} & -u_{t,j}^2 & 0 \\
                1 & -2u_{t,j} & 0 \\
                0 & 0 & -u_{t,j} \\
              \end{array}
            \right),
\]
with determinant $\det B(t,j)=  u_{t,j}^2(2-3 u_{t,j})$. Thus,
for $t\to 0$, $t\neq 0$ the points $P_j(t)$ are isolated
non-degenerate singularities for $X_t$ and one can use
\eqref{resi-nondeg} to compute the residues:
\begin{equation}\label{XsP}
\begin{split}
&\hbox{BB}_{c_1^3}(X_t;P_j(t))=\frac{(1-4u_{t,j})^3}{u_{t,j}^2(2-3 u_{t,j})} \quad \hbox{BB}_{c_1c_2}(X_t;P_j(t))=\frac{3(6u_{t,j}-1-8u_{t,j})}{u_{t,j}^2(2-3 u_{t,j})} \\& \hbox{BB}_{c_3}(X_t;P_j(t))=1.
\end{split}
\end{equation}

Now, as $t\to 0$, it follows that two of the roots of of the
equation $\lambda^3-\lambda^2-t=0$ tend to $0$ and one tends to
$1$. We assume that $u_{t,1}, u_{t,2}\to 0$ and $u_{t,3}\to1$.
Hence, if $S'(\tmF)$ is the connected component which contains
the line $L$ in the manifold deformation $M\times (-1,1)$, the
intersection of $S'(\tmF)$ with $M\times \{t\}$ is given by the
points $O, P_1(t), P_2(t)$. While, the connected component in
$M\times (-1,1)$ which contains $Q$ contains all the points
$P_3(t)$.

A direct computation---taking into account that
$u_{t,1}+u_{t,2}+u_{t,3}=1$,
$u_{t,1}u_{t,2}+u_{t,1}u_{t,3}+u_{t,2}u_{t,3}=0$ and
$u_{t,1}u_{t,2}u_{t,3}=t$---shows that for $\varphi=c_1^3,
c_1c_2, c_3$
\begin{equation*}
\begin{split}
&\lim_{t\to 0}\hbox{BB}_{\varphi}(X_t;P_3(t))=\hbox{BB}_{\varphi}(X;Q),\\
&\lim_{t\to 0}[\hbox{BB}_{\varphi}(X_t;P_1(t))+\hbox{BB}_{\varphi}(X_t;P_2(t))+\hbox{BB}_{\varphi}(X_t;O)]=\hbox{BB}_{\varphi}(X;L).
\end{split}
\end{equation*}


\begin{thebibliography}{CoM}

\bibitem{ABST} M. Abate, F. Bracci, T. Suwa and F. Tovena, {\em Localization of Atiyah classes}, arXiv:1005.1482v1 [math.CV], 2010.

\bibitem{ABT}   M. Abate, F. Bracci and F. Tovena,  {\em Index theorems for
holomorphic maps and foliations}, Indiana Univ. Math. J. {\bf 57}
(2008), 2999-3048.

\bibitem{BB2}  P. Baum and R. Bott,
{\em Singularities of holomorphic foliations},  J. Differential
Geom. {\bf 7} (1972), 279-342.

\bibitem{D} El H. M. Dia, {\em Quelques r\'esultats
nouveaux sur les r\'esidus de Baum-Bott}, Th\`ese, Universit\'e
Montpellier II, 2007.

\bibitem{G} X. G\'omez-Mont, {\em The transverse dynamics of a holomorphic flow}, Ann. of Math.
(2) {\bf 127} (1988), no. 1, 49--92.

\bibitem{KS}  K. Kodaira and D. C. Spencer, {\em On deformations of complex analytic structures. I,
II}, Ann. of Math. (2)  {\bf 67}  (1958), 328--466.

\bibitem{L} D. Lehmann, {\em Vari\'et\'es stratifi\'ees $C^\infty$: int\'egration de \v{C}ech-de
 Rham, et th\'eorie de Chern-Weil}, Geometry and topology of submanifolds, II (Avignon, 1988),
 205--248, World Sci. Publ., Teaneck, NJ,  1990.


\bibitem{P} G. Pourcin, {\em Deformations of singular holomorphic foliations on reduced compact
 $\C$-analytic spaces}, Holomorphic dynamics (Mexico, 1986), 246--255, Lecture Notes in Math., 1345, Springer, Berlin,
1988.

\bibitem{R} H.-J. Reiffen, {\em The variety of a moduli of foliations on a complex
space},
Enseign. Math. (2) {\bf 33} (1987), no. 3-4, 191--197.

\bibitem{Su2}  T. Suwa,
{\em Indices of Vector Fields and Residues of Singular
Holomorphic Foliations},
 Actualit\'es Math\'ematiques, Hermann, Paris, 1998.

 \end{thebibliography}
\end{document}